\newtheorem{theorem}{Theorem}
\newtheorem{lemma}[theorem]{Lemma}
\newtheorem{corollary}[theorem]{Corollary}
\newtheorem{proposition}[theorem]{Proposition}
\theoremstyle{definition}
\newtheorem{definition}[theorem]{Definition}
\newtheorem{example}[theorem]{Example}
\newtheorem*{warning}{Warning}
\theoremstyle{remark}
\newtheorem{remark}[theorem]{Remark}
\newcommand{\KS}{K^{\rm s}}
\newcommand{\Ss}{S^{\rm s}}
\newcommand{\Jac}{{\rm Jac}}
\newcommand{\Pic}{{\rm Pic}}
\newcommand{\Hom}{{\rm Hom}}
\newcommand{\Spec}{{\rm Spec}}
\newcommand{\Res}{{\rm Res}}
\newcommand{\Wr}{{\mathfrak{R}}}
\newcommand{\rk}{{\rm rk}}
\title{Finite Weil restriction of curves}
\author{E.V.\ Flynn}
\address{Mathematical Institute, University of Oxford, 24--29 St.\ Giles,
Oxford OX1 3LB, United Kingdom}
\email{flynn@maths.ox.ac.uk}
\author{D.\ Testa}
\address{Mathematics Institute, Zeeman Building, University of Warwick, 
Coventry CV4 7AL, United Kingdom}
\email{D.Testa@warwick.ac.uk}
\subjclass{Primary 11G30; Secondary 11G10, 14H40}
\keywords{Higher Genus Curves, Jacobians,
Weil Restriction}
\thanks{Both authors have been partially
supported by EPSRC grant EP/F060661/1.}
\date{30 September, 2012}
\begin{document}

\begin{abstract}
Given number fields $L \supset K$, smooth projective curves~$C$ 
defined over $L$ and~$B$ defined over $K$, and a non-constant 
$L$-morphism $h \colon C \to B_L$,
we consider the curve $C_h$ defined over~$K$ whose $K$-rational
points parametrize the $L$-rational points on~$C$ whose images
under~$h$ are defined over~$K$. Our construction 
gives precise criteria for deciding the applicability of Faltings' Theorem 
and the Chabauty method to find the points of the curve $C_h$.  
We provide a framework
which includes as a special case that used in Elliptic Curve
Chabauty techniques and their higher genus versions.
The set $C_h(K)$ can be infinite only when~$C$ has genus at
most~$1$; we analyze completely the case when~$C$ has genus~1. 
\end{abstract}

\maketitle

\section*{Introduction}

Let $L$ be a number field of degree $d$ over $\mathbb{Q}$, let 
$f \in L[x]$ be a polynomial with coefficients in $L$, and define 
\[
A_f := \bigl\{ x \in L ~ \mid ~ f(x) \in \mathbb{Q} \bigr\} .
\]
Choosing a basis of $L$ over $\mathbb{Q}$ and writing explicitly the 
conditions for an element of $L$ to lie in $A_f$, it is easy to see 
that the set $A_f$ is the set of rational solutions of $d-1$ polynomials 
in $d$ variables.  Thus we expect the set $A_f$ to be the set of rational 
points of a (possibly reducible) curve $C_f$; indeed, this is always 
true when $f$ is non-constant.  A basic question that we would like to 
answer is to find conditions on $L$ and $f$ that guarantee that the 
set $A_f$ is finite, and ideally to decide when standard techniques 
can be applied to explicitly determine this set.

We formalize and generalize the previous problem as follows.  
Let $L \supset K$ be a finite separable field extension, let 
$B \to \Spec(K)$ be a smooth projective curve defined over $K$ and 
let $C$ be a smooth projective curve defined over $L$.  Denote by $B_L$ 
the base-change to $L$ of the curve $B$, and suppose that $h \colon C \to B_L$ 
is a non-constant morphism defined over $L$.  Then there is a (possibly 
singular and reducible) curve $C_h$ defined over $K$ whose $K$-rational 
points parametrize the $L$-rational points $p \in C(L)$ such that 
$h(p) \in B(K) \subset B_L(L)$.  
In this context, Theorem~\ref{cocha} identifies an abelian subvariety $F$ of 
the Jacobian of the curve $C_h$ and provides a formula to compute the 
rank of the Mordell-Weil group of $F$: these are the ingredients needed to 
apply the Chabauty method to determine the rational points of the curve $C_h$.
To see the relationship of this general 
problem with the initial motivating question, we let $C:=\mathbb{P}^1_L$ 
and $B:=\mathbb{P}^1_\mathbb{Q}$.  The polynomial $f$ determines a 
morphism $h \colon \mathbb{P}^1_L \to (\mathbb{P}^1_{\mathbb{Q}})_L$ 
and the $L$-points of $\mathbb{P}^1_L$ (different from $\infty$) with 
image in $\mathbb{P}^1(\mathbb{Q})$ correspond to the set $A_f$.

Over an algebraic closure of the field $L$, the curve $C_h$ is isomorphic 
to the fibered product of morphisms $h_i \colon C_i \to B_L$ obtained 
from the initial morphism $h$ by taking Galois conjugates (Lemma~\ref{profi}).
Thus we generalize further our setup: we concentrate our attention on the 
fibered product of finitely many morphisms 
$h_1 \colon C_1 \to B$, \ldots , $h_n \colon C_n \to B$, 
where $C_1 , \ldots , C_n$ and $B$ are smooth curves and the morphisms 
$h_1,\ldots,h_n$ are finite and separable.  We determine the geometric 
genus of the normalization of 
$C_h$, as well as a natural abelian subvariety $J_h$ of the Jacobian of $C_h$.  
Due to the nature of the problem and of the arguments, it is immediate to 
convert results over the algebraic closure to statements over the 
initial field of definition.

Suppose now that $K$ is a number field. We may use the Chabauty method 
to find the rational points on $C_h$, provided the abelian variety $J_h$ 
satisfies the condition that the 
rank of $J_h(K)$ is less than the genus of~$C_h$;
by Chabauty's Theorem (see \cites{prolegom,chab1,chab2})
this guarantees that $C_h(K)$ is finite. Chabauty's Theorem has
been developed into a practical technique, which has been
applied to a range of Diophantine problems, for example
in \cites{colemanchab,colemanpadic,flynnchab,lortuck}.
Further, if the set $C_h(K)$ is infinite, then, by Faltings' 
Theorem \cite{faltings}, the curve $C_h$ contains a component 
of geometric genus at most one; thus the computation of the geometric 
genus of the normalization of $C_h$ is a first step towards answering 
the question of whether $C_h(K)$ is finite or not. Moreover, since all 
the irreducible components of $C_h$ dominate the curve $C$, it follows 
that the set $C_h(K)$ can be infinite only in the case in which the 
curve $C$ has geometric genus at most one. In the case in which $C$ has 
genus zero, results equivalent to special cases of this question have 
already been studied (\cites{az,bt,pa,sch,za}). We shall analyze completely the 
case in which the genus of $C$ is one and the curve $C_h$ has infinitely 
many rational points. This covers as a special case the method
called Elliptic Curve Chabauty (which is commonly applied 
to an elliptic curve~$E$ defined over a number field~$L \supset K$,
satisfying that the rank of $E(L)$ is less than $[L : K]$ and
we wish to find all $(x,y) \in E(L)$ subject to an arithmetic
condition such as $x \in K$); see, for example, 
\cites{bruinth,bruinchab,flywet1,flywet2,wethth}
and a hyperelliptic version in \cite{siksekchab}).

\bigskip

\begin{example}
Let $K$ be a number field and let
$E : y^2 = (a_2 x^2 + a_1 x + a_0)(x + b_1 + b_2 \sqrt{d})$,
with $a_0,a_1,a_2,b_1,b_2,d \in K$,
be an elliptic curve defined over $L = K(\sqrt{d})$; suppose
also that $b_2 \not= 0$, $d \not= 0$, $d \not\in (K^*)^2$,
so that $E$ is not defined over~$K$. 
We are interested in $(x,y) \in  E(L)$ with $x \in K$.
Let $y = r + s\sqrt{d}$, with $r,s \in K$. Equating coefficients
of $1,\sqrt{d}$ gives
\[ 
r^2 + d s^2 = (a_2 x^2 + a_1 x + a_0) (x + b_1),\ \
2 r s = (a_2 x^2 + a_1 x + a_0) b_2.
\]
Let $t = s^2/(a_2 x^2 + a_1 x + a_0)$.
Eliminate~$r$ to obtain $b_2^2/(4 t) + d t = x + b_1$,
so that $x = x(t) := b_2^2/(4 t) + d t - b_1$.
Hence $s,t$ satisfy the curve $C : (st)^2 = t^3(a_2 x(t)^2 + a_1 x(t) + a_0)$,
for which the right hand side is a quintic in~$t$.
One can check directly that this quintic has discriminant
whose factors are powers of: $a_0, b_2, d$, the discriminant
of~$E$ and its conjugate; all of these are guaranteed to be
nonzero, from our assumptions, so that~$C$ has genus~$2$.
By Faltings' Theorem, $C(K)$ is finite, so there are finitely
many such $s,t\in K$, and hence finitely many $x = x(t)$,
and so finitely many $(x,y) \in  E(L)$ with $x \in K$.
Furthermore, one can check directly from the induced map from $C$ to $E$
that, if $J$ is the Jacobian of~$C$ then 
$\hbox{rank}\bigl(J(K)\bigr) = \hbox{rank}\bigl(E(L)\bigr)$; 
hence if $\hbox{rank}\bigl(E(L)\bigr) < [L:K] = 2$
(the condition for Elliptic Curve Chabauty)
then $C$ satisfies the conditions for Chabauty's Theorem over~$K$.
So, for this special case, there is a perfect match between both conditions.
\end{example}

\begin{example}
Let $f \in \mathbb{Q}(\sqrt{2})[x]$ be the polynomial 
$f(x) = x^2 (x - \sqrt{2})$ and suppose that we are interested in 
finding the set $A_f$ of values of $x \in \mathbb{Q}(\sqrt{2})$ such that 
$f(x) \in \mathbb{Q}$.  Writing $x=a+b \sqrt{2}$ with $a,b \in \mathbb{Q}$ 
and substituting in $f$ we find that $f(a + b \sqrt{2})$ is a rational 
number if and only if the equality $a^2+2b^2=3a^2b+2b^3$ holds.  Apart 
from the solution $(a,b)=(0,0)$, all the remaining solutions of the 
resulting equation can be determined by the substitution $a=tb$ and 
the set $A_f$ is the set 
\[
\left\{ \frac{t^2+2}{(3t^2+2)} (t+\sqrt{2})~ \Bigl| ~ t 
\in \mathbb{Q} \right\} \cup \bigl\{ 0 \bigr\} .
\]
\end{example}
\begin{example}
Let $f \in \mathbb{Q}(\sqrt{2})[x]$ be the polynomial 
$f(x) = \frac{x (x - \sqrt{2})}{x-1}$.
Arguing similarly to the previous example, we find that the set values of 
$x \in \mathbb{Q}(\sqrt{2})$ such that $f(x) \in \mathbb{Q}$ consists of 
the solutions to the equation $a^2 b - a^2 - 2 a b + a - 2 b^3 + 2 b^2 = 0$.  
The projective closure of the previous equation defines an elliptic 
curve $E$ with Weierstrass form $y^2=x^3-x$; by a 2-descent it is easy 
to show that $E(\mathbb{Q}) = E(\mathbb{Q})[2]$ and we conclude that 
the set we are seeking is the set $\{ 0 , \sqrt{2} \}$.
\end{example}

The last two of these examples exhibit the two qualitative behaviours 
that we analyze in what follows.

\section{Relative Weil Restriction} \label{rwr}

We begin this section by recalling the definition of the Weil restriction 
functor.  The setup is quite general, though we will only use it in a 
very specialized context.  We prefer to adopt this formal point of view at 
the beginning, since it simplifies the arguments; for the cases mentioned 
in the introduction, it is straightforward to translate all our arguments 
into explicit computations that are also easy to verify.

Let $s \colon S' \to S$ be a morphism of schemes and let $X'$ be 
an $S'$-scheme; the contravariant functor 
\[
\begin{array}{rcl}
\Wr_{S'/S}(X') \colon ({\rm Sch}/S) ^o & \longrightarrow & {\rm Sets} \\[5pt]
T & \longmapsto & {\rm Hom}_{S'} (T \times _S S' , X')
\end{array}
\]
is the {\em Weil restriction functor}.  If the functor $\Wr_{S'/S}(X')$ 
is representable, then we denote by $\Wr_{S'/S}(X')$ also the scheme 
representing $\Wr_{S'/S}(X')$; sometimes, to simplify the notation, we 
omit the reference to $S$ and $S'$ and write that $\Wr(X')$ is the Weil 
restriction of $X'$.  The scheme $\Wr(X')$ is determined by the isomorphism
 
\[
{\rm Hom}_S ( - , \Wr(X')) \stackrel{\sim}{\longrightarrow} {\rm Hom}_{S'} 
(- \times_S S', X') 
\]
of functors $({\rm Sch}/S)^o \to {\rm Sets}$.  Informally this means that 
the $S$-valued points of $\Wr(X')$ are the same as the $S'$-valued points 
of $X'$.

Suppose now that $Y \to S$ is another morphism of schemes, denote by $Y'$ 
the fibered product $Y \times _S S'$ with natural morphism 
$b \colon Y' \to Y$, and let $h \colon X \to Y'$ be any scheme.  
We have the diagram 
\begin{equation} \label{diato}
\begin{minipage}{150pt}
\xymatrix{
X \ar[d] _h \\
Y' \ar[d] \ar[r]^b & Y \ar[d] \\
S' \ar[r] ^{s} & S }
\end{minipage}
\end{equation}
and $X$ is therefore both a $Y'$-scheme and an $S'$ scheme.  Thus 
there are two possible Weil restrictions we can construct: 
\begin{itemize}
\item
the Weil restriction $\Wr_{S'/S} (X)$, using the $S'$-scheme structure of $X$, 
\item
the Weil restriction $\Wr_{Y'/Y} (X)$, using the $Y'$-scheme structure of $X$.
\end{itemize}
Below we shall use the notation $\Res_h(X)$ for the Weil restriction 
$\Wr_{Y'/Y}(X)$ and call it the {\em{relative Weil restriction}}.

In order to relate the Weil restriction $\Wr_{Y'/Y}(X)$ to the discussion 
in the introduction, we give an alternative definition of this functor and 
then proceed to prove the equivalence of the two.  For concreteness, suppose 
that in diagram~\eqref{diato} the morphism $S' \to S$ is induced by a 
(finite, separable) field extension $L \supset K$; then there is a subset 
of the $L$-points of $X$ whose image under $h$ is not simply an $L$-point 
of $Y'$, but it is actually a $K$-point of $Y$.  We would like to say that 
this set of $L$-points of $X$ with $K$-rational image under $h$ are the 
$K$-rational points of a scheme defined over $K$, and that this scheme 
{\em{is}} the relative Weil restriction of $X$.  This is what we discussed 
in the introduction and we now formalize it.

Hence, let $T$ be any $S$-scheme and denote by $T'$ the $S'$-scheme 
$T \times _S S'$.  Pull-back by the morphism $s$ defines a function 
${\rm Hom}_S (T,Y) \to {\rm Hom}_{S'} (T',Y')$; there is also a function 
${\rm Hom}_{S'} (T',X) \to {\rm Hom}_{S'} (T',Y')$ determined by composition 
with $h$.  Summing up, for any $S$-scheme $T$, we obtain a diagram 
\begin{equation} \label{sta}
\begin{minipage}{150pt}
\xymatrix{{\rm Hom}(T,X/Y) \ar@{-->}[r] \ar@{-->}[d] 
 & {\rm Hom}_{S'} (T',X) \ar[d] ^{h \circ -}\\
{\rm Hom}_S (T,Y) \ar[r] ^{s^*} & {\rm Hom}_{S'} (T',Y') .
}
\end{minipage}
\end{equation}
We denote by ${\rm Hom}(T,X/Y)$ the pull-back of diagram~\eqref{sta}, and 
we define the {\em relative Weil restriction functor} to be the functor 
\[
\begin{array}{rcl}
\Res_h \colon ({\rm Sch}/S)^o & \longrightarrow & {\rm Sets} \\[5pt]
T & \longmapsto & {\rm Hom}(T,X/Y). 
\end{array}
\]
If the functor $\Res_h$ is representable, we denote a scheme representing 
it by $\Res_h(X)$.

\begin{lemma} \label{lem:rwr}
Let $s \colon S' \to S$ be a morphism of schemes.  Let $X$ be an $S'$-scheme 
and let $Y$ be an $S$-scheme; denote by $Y_{S'}$ the $S'$-scheme 
$Y \times _S S'$.  Let $h \colon X \to Y_{S'}$ be an $S'$-morphism and 
assume that both Weil restriction functors $\Wr(X)$ and $\Wr(Y_{S'})$ are 
representable (notation as in~\eqref{diato}).  Then the relative Weil 
restriction functor $\Res_h \colon ({\rm Sch}/S)^o \to {\rm Sets}$ is 
representable and the schemes $\Res_h(X)$ and $\Wr_{Y'/Y}(X)$ coincide.  
Moreover, there is a commutative diagram of $S$-schemes 
\begin{equation} \label{dere}
\begin{minipage}{150pt}
\xymatrix{
\Res_h(X) \ar[r] \ar[d] & \Wr(X) \ar[d]^{h'} \\
Y \ar[r]^{\iota~~~} & \Wr(Y_{S'}) 
}
\end{minipage}
\end{equation}
exhibiting $\Res_h(X)$ as a fibered product of $\Wr(X)$ and $Y$ over 
$\Wr(Y_{S'})$.
\end{lemma}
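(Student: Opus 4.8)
The plan is to verify that the fibered product of the bottom diagram~\eqref{dere} represents the functor $\Res_h$, and then invoke the universal property of fibered products to identify it with $\Wr_{Y'/Y}(X)$. The entire proof should be a matter of unwinding the defining adjunctions and checking that two pullback squares of functors agree. Since representability of $\Wr(X)$ and $\Wr(Y_{S'})$ is assumed, the scheme-theoretic fibered product $\Wr(X) \times_{\Wr(Y_{S'})} Y$ exists, and I will show its functor of points is canonically isomorphic to $\Res_h$.

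First I would compute $\Hom_S\bigl(T, \Wr(X) \times_{\Wr(Y_{S'})} Y\bigr)$ using that $\Hom_S(T,-)$ commutes with fibered products. This gives the fibered product, in the category of sets, of $\Hom_S(T, \Wr(X))$ and $\Hom_S(T, Y)$ over $\Hom_S\bigl(T, \Wr(Y_{S'})\bigr)$. Now I apply the defining isomorphism of the Weil restriction functor to the two outer terms: $\Hom_S(T, \Wr(X)) \cong \Hom_{S'}(T', X)$ and $\Hom_S\bigl(T, \Wr(Y_{S'})\bigr) \cong \Hom_{S'}(T', Y')$, where $T' = T \times_S S'$. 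The central term $\Hom_S(T, Y)$ is left untouched. The key point to check is that, under these identifications, the two maps into the base term $\Hom_{S'}(T', Y')$ are exactly the two maps $h \circ -$ and $s^*$ appearing in diagram~\eqref{sta}: the map from $\Hom_{S'}(T',X)$ is postcomposition with $h$ (which is $h'$ on Weil restrictions, translated through the adjunction), and the map from $\Hom_S(T,Y)$ is the pullback $s^*$ (which is $\iota$ translated through the adjunction). Granting this compatibility, the set-theoretic fibered product is by definition $\Hom(T, X/Y)$, i.e.\ $\Res_h(T)$, so $\Wr(X) \times_{\Wr(Y_{S'})} Y$ represents $\Res_h$.

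For the identification $\Res_h(X) = \Wr_{Y'/Y}(X)$, I would unwind the definition of $\Wr_{Y'/Y}(X)$ directly: its $T$-points (as a $Y$-scheme, via a structure map $T \to Y$) are $\Hom_{Y'}(T \times_Y Y', X)$. Writing $T \times_Y Y' = T \times_Y (Y \times_S S') = T \times_S S' = T'$, one sees that a $Y'$-morphism $T' \to X$ is precisely an $S'$-morphism $T' \to X$ whose composite with $h$ equals the given $S'$-morphism $T' \to Y'$ obtained from the $Y$-structure $T \to Y$ by pullback along $s$. This is literally the datum of an element of the fibered product in~\eqref{sta}, so the two functors agree tautologically, and hence the representing schemes coincide.

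The naturality in $T$ of every identification above is routine and I would only indicate it, not belabor it. \textbf{The main obstacle} is the careful bookkeeping in the second paragraph: confirming that the adjunction isomorphisms for the two Weil restrictions intertwine the morphisms $h'$ and $\iota$ with the functorial maps $h\circ -$ and $s^*$, so that the two candidate pullback diagrams really are the same square of functors. This amounts to checking that $h'$ is the morphism $\Wr$ applied to $h$ and that $\iota$ is the unit-type map $Y \to \Wr(Y_{S'})$ adjoint to the identity of $Y_{S'}$; once these two morphisms are pinned down by their universal properties, the compatibility is forced, and the commutativity of~\eqref{dere} follows as a byproduct of the identification rather than requiring separate verification.
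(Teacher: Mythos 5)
Your proposal is correct and follows essentially the same route as the paper: construct $h'$ and $\iota$ as the adjoints of $h$ (composed with the counit) and of the identity of $Y_{S'}$ respectively, define $\Res_h(X)$ as the scheme-theoretic fibered product, verify representability by computing $\Hom_S(T,-)$ through the adjunctions, and identify $\Wr_{Y'/Y}(X)$ by unwinding its $T$-points over $Y$-schemes via $T\times_Y Y' = T\times_S S'$. The compatibility bookkeeping you flag as the main obstacle is exactly the content of the paper's explicit construction of $h'$ via the counit $\gamma$, so nothing is missing.
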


\begin{proof}
We begin by constructing an $S$-morphism $h' \colon \Wr(X) \to \Wr(Y_{S'})$.  
By representability of $\Wr(Y_{S'})$ and of $\Wr(X)$ there are natural 
bijections 
\[
\begin{array}{rcll}
{\rm Hom}_{S} (\Wr(X) , \Wr(Y_{S'})) & \stackrel{\sim}{\longrightarrow} 
& {\rm Hom}_{S'} (\Wr(X)_{S'} , Y_{S'}) 
& \quad
{\textrm{and}} \\[7pt]
{\rm Hom}_{S} (\Wr(X) , \Wr(X)) & \stackrel{\sim}{\longrightarrow} 
& {\rm Hom}_{S'} (\Wr(X)_{S'} , X) .
\end{array}
\]
Let $\gamma \in {\rm Hom}_{S'} (\Wr(X)_{S'} , X)$ be the $S'$-morphism 
corresponding to the identity in ${\rm Hom}_{S} (\Wr(X) , \Wr(X))$ and 
let $h' \colon \Wr(X) \to \Wr(Y_{S'})$ be the $S$-morphism corresponding 
to $h \circ \gamma \in {\rm Hom}_{S'} (\Wr(X)_{S'} , Y_{S'})$.  Note also 
that there is an $S$-morphism $\iota \colon Y \to \Wr(Y_{S'})$ corresponding 
to the identity in ${\rm Hom}_{S'} (Y_{S'}, Y_{S'})$.  Define 
$\Res_h(X) := Y \times _{\Wr(Y_{S'})} \Wr(X)$ so that there is a commutative 
diagram as in~\eqref{dere}.  To check that $\Res_h(X)$ represents $\Res_h$, 
let $T$ be any $S$-scheme; we have 
\begin{eqnarray*}
{\rm Hom}_S (T,\Res_h(X)) & = & {\rm Hom}_S (T,Y) 
\times_{{\rm Hom}_S (T , \Wr(Y_{S'}))} {\rm Hom}_S (T,\Wr(X)) = \\[5pt]
& = & {\rm Hom}_S (T,Y) \times_{{\rm Hom}_S (T,\Wr(Y_{S'}))} {\rm Hom}_{S'} 
(T_{S'},X) = {\rm Hom} (T,X/Y)
\end{eqnarray*}
as required.  Thus, the scheme $\Res_h(X)$ defined above represents the 
functor $\Res_h$ and is obtained by the fibered product~\eqref{dere}.

Finally, we check that the schemes $\Res_h(X)$ and $\Wr_{Y'/Y}(X)$ coincide.  
Let $T$ be any $Y$-scheme; we have 
$T \times_Y Y' = T \times _Y (Y \times_S S') 
= (T \times _Y Y) \times_S S' = T \times_S S'$ and 
\begin{eqnarray*}
{\rm Hom}_Y (T,\Wr(X)) & = & {\rm Hom}_{Y'} (T \times_Y Y',X) = {\rm Hom}_{Y'} 
(T \times _S S',X) = \\[5pt]
& = & \Bigl\{ f \in {\rm Hom}_{S'} (T \times_S S',X) ~\Bigl|~ h \circ f 
= s^*(T_Y \to Y) \Bigr\} = \\[5pt]
& = & {\rm Hom} (T,X/Y) = {\rm Hom}_Y (T, \Res_h(X)) .
\end{eqnarray*}
We conclude using Yoneda's Lemma that $\Res_h(X)$ and $\Wr_{Y'/Y}(X)$ 
coincide, and hence the lemma follows.
\end{proof}

Informally, we may describe the scheme $\Res_h(X)$ as the scheme 
whose $S$-valued points correspond to the points in $X(S')$ lying above 
the points in $Y(S)$.

Observe that, by the construction of $\Res_h(X)$ and diagram~\eqref{dere}, 
there is an $S'$-morphism $\Res_h(X)_{S'} \to X$.  The morphism 
$\Res_h(X)_{S'} \to X$ will prove useful later.

We now introduce some notation that is used in the following lemma.  
Let $L \supset K$ be a finite, separable field extension of degree $n$, 
and let $\KS$ be a separable closure of $K$.  Set $S' := {\rm Spec}(L)$, 
$S := {\rm Spec}(K)$, and $\Ss := {\rm Spec}(\KS)$, denote by $s \colon S' 
\to S$ the morphism of schemes corresponding to the extension $L \supset K$ 
and suppose that $\varphi_1 , \ldots , \varphi_n \colon \Ss \to S'$ are the 
morphisms corresponding to all the distinct embeddings of $L$ into $\KS$ 
fixing $K$.  Given an $S'$-scheme $X$, we denote by $X_1 , \ldots , X_n$ 
the $\Ss$-schemes obtained by pulling back $X \to S'$ under the morphisms 
$\varphi_1 , \ldots , \varphi_n$.

\begin{lemma} \label{profi}
Let $X$ be an $S'$-scheme and let $Y$ be an $S$-scheme.  
Let $h \colon X \to Y_{S'}$ be an $S'$-morphism and assume that both Weil 
restriction functors $\Wr(X)$ and $\Wr(Y_{S'})$ are representable.  
Then the isomorphism 
\begin{equation} \label{diago}
\Res_h (X)_{{\Ss}} \simeq X_1 \times_{Y_{{\Ss}}} X_2 \times_{Y_{{\Ss}}} 
\cdots \times_{Y_{{\Ss}}} X_n
\end{equation}
holds.  More precisely, the isomorphism~\eqref{diago} holds replacing $\KS$ 
by any normal field extension of $K$ containing $L$.
\end{lemma}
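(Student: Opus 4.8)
The plan is to argue functorially, combining the fibered-product description of $\Res_h(X)$ from Lemma~\ref{lem:rwr} with the splitting of the extension $L \supset K$ over $\KS$. Working this way lets me compute the whole fibered product at once, so that I never need to analyze the objects and the structure morphisms $h'$ and $\iota$ of diagram~\eqref{dere} separately. Concretely, for an arbitrary $\Ss$-scheme $T$ I would compute $\Hom_{\Ss}(T, \Res_h(X)_{\Ss}) = \Hom_S(T, \Res_h(X))$ and exhibit a natural bijection of this with $\Hom_{\Ss}\bigl(T, X_1 \times_{Y_{\Ss}} \cdots \times_{Y_{\Ss}} X_n\bigr)$; the isomorphism~\eqref{diago} then follows from Yoneda's Lemma.

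The first step is the key algebraic input: separability of $L \supset K$ gives an isomorphism of $S'$-schemes $\Ss \times_S S' \simeq \coprod_{i=1}^n \Ss$, where the $i$-th copy maps to $S'$ by $\varphi_i$. This is the decomposition $\KS \otimes_K L \simeq \prod_{i=1}^n \KS$ induced by the $n$ embeddings, and it is exactly here that separability, together with the fact that $\KS$ receives all $n$ embeddings, is used. Consequently, for a $\Ss$-scheme $T$ one gets $T_{S'} = T \times_S S' \simeq \coprod_{i=1}^n T_i$, where $T_i$ denotes $T$ viewed as an $S'$-scheme via $\varphi_i$. I would then record the corresponding natural bijections $\Hom_{S'}(T_{S'}, X) \simeq \prod_i \Hom_{S'}(T_i, X) \simeq \prod_i \Hom_{\Ss}(T, X_i)$, using the universal property of the fibered products $X_i = X \times_{S',\varphi_i} \Ss$, and similarly $\Hom_{S'}(T_{S'}, Y_{S'}) \simeq \prod_i \Hom_{\Ss}(T, Y_{\Ss})$ after identifying each pullback $(Y_{S'})_i$ with $Y_{\Ss}$.

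The second step is to feed these identifications into the pullback square~\eqref{sta} defining $\Hom(T, X/Y) = \Hom_S(T, \Res_h(X))$. Under the decomposition above, an element $f \in \Hom_{S'}(T_{S'}, X)$ becomes a tuple $(f_1, \ldots, f_n)$ with $f_i \in \Hom_{\Ss}(T, X_i)$, and $h \circ f$ becomes $(h_1 \circ f_1, \ldots, h_n \circ f_n)$, where $h_i \colon X_i \to Y_{\Ss}$ is the pullback of $h$ under $\varphi_i$. The crucial observation is that the pullback $s^*(g)$ of a morphism $g \in \Hom_S(T,Y)$ is the constant (diagonal) tuple $(\bar g, \ldots, \bar g)$, where $\bar g \colon T \to Y_{\Ss}$ is the $\Ss$-morphism associated with $g$; this is because $g$ is defined over the base $S$, so its restriction to each component $T_i$ yields the same $\Ss$-morphism $\bar g$. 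Hence the fibered-product condition $h \circ f = s^*(g)$ reads $h_i \circ f_i = \bar g$ for every $i$, so $\bar g$ is redundant and $\Hom(T, X/Y)$ is naturally the set of tuples $(f_1,\ldots,f_n)$ satisfying $h_1 \circ f_1 = \cdots = h_n \circ f_n$. This is precisely $\Hom_{\Ss}\bigl(T, X_1 \times_{Y_{\Ss}} \cdots \times_{Y_{\Ss}} X_n\bigr)$, and Yoneda's Lemma yields~\eqref{diago}.

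I expect the main obstacle to be bookkeeping rather than any deep idea: one must keep straight the three base schemes $S$, $S'$, $\Ss$ and verify that all the identifications above are natural in $T$, in particular that the diagonal description of $s^*(g)$ and the componentwise description of $h \circ f$ are mutually compatible. Finally, for the last sentence of the statement I would observe that the only property of $\KS$ used is the splitting $\Ss \times_S S' \simeq \coprod_i \Ss$; any normal extension $N \supset K$ containing $L$ enjoys the analogous splitting $\Spec(N) \times_S S' \simeq \coprod_i \Spec(N)$, since normality forces all $n$ embeddings of $L$ to land in $N$ while separability supplies the étale product decomposition, so the entire argument applies verbatim.
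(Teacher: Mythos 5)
Your argument is correct, but it takes a genuinely different route from the paper. You work purely functorially: you split $\Ss \times_S S' \simeq \coprod_{i=1}^n \Ss$ using separability, decompose $T_{S'}$ accordingly for a test scheme $T$, observe that $s^*(g)$ becomes the diagonal tuple while $h\circ f$ becomes the componentwise tuple $(h_i\circ f_i)_i$, and conclude by Yoneda that the pull-back defining $\Hom(T,X/Y)$ is exactly the functor of points of $X_1\times_{Y_{\Ss}}\cdots\times_{Y_{\Ss}}X_n$. (The one point worth making fully explicit is that $g\mapsto \bar g$ is a \emph{bijection} $\Hom_S(T,Y)\to\Hom_{\Ss}(T,Y_{\Ss})$, so that every tuple with $h_1\circ f_1=\cdots=h_n\circ f_n$ does arise from a unique $g$; you gesture at this with ``$\bar g$ is redundant,'' and it is true by the universal property of $Y_{\Ss}=Y\times_S\Ss$.) The paper instead reduces to the affine case $Y=\Spec(A)$, $X=\Spec(B)$, chooses a basis $\alpha_1,\dots,\alpha_n$ of $L$ over $K$, and shows by a discriminant computation that the linear forms $\tilde x_j=\sum_i x_i\varphi_j(\alpha_i)$ give an invertible change of coordinates over $\KS$, under which the ideal generated by the components $g_1,\dots,g_n$ of each defining equation becomes the ideal generated by the conjugates $\varphi_1(g),\dots,\varphi_n(g)$. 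Separability enters in both proofs at the same spot, packaged differently: for you as the \'etale splitting $\KS\otimes_K L\simeq\prod_i\KS$, for the paper as the nonvanishing of the trace-form discriminant. Your version is cleaner, basis-free, and makes the final sentence of the lemma (replacing $\KS$ by any normal extension containing $L$) immediate; the paper's version has the practical advantage of producing explicit defining equations for $\Res_h(X)$, which is exactly what the later worked examples rely on.
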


\begin{proof}
To prove the result, it suffices to consider the case in which both $X$ 
and $Y$ are affine; thus suppose that $Y = {\rm Spec}(A)$, 
$X = {\rm Spec}(B)$, and that  $\underline{x}$ denotes a set of generators 
of $B$ as an $A$-algebra.

Let $\alpha_1 , \ldots , \alpha_n$ denote a basis of $K$ over $L$ and 
let $\underline{x}_1 , \ldots , \underline{x}_n$ denote disjoint sets of 
variables each in bijection with $\underline{x}$; whenever we denote a 
variable in $\underline{x}$ by a symbol such as $x$, we denote by the 
symbols $x_1 , \ldots x_n$ the variables corresponding to $x$ 
in $\underline{x}_1 , \ldots , \underline{x}_n$.  
For every $x \in \underline{x}$ 
define linear forms $\tilde{x}_1 := \sum _i x_i \varphi_1(\alpha_i) , 
\ldots , \tilde{x}_n := \sum _i x_i \varphi_n(\alpha_i)$ in 
$A[\underline{x}_1 , \ldots , \underline{x}_n]$.  First we show that the 
forms $\tilde{x}_1 , \ldots , \tilde{x}_n$ are linearly independent.  
Indeed, let $\Delta$ be the matrix whose $(i,j)$ entry is 
$\varphi_j(\alpha_i)$; the $(i,j)$ entry of $\Delta \Delta^t$ 
is $\sum _k \varphi_k(\alpha_i \alpha_j) = {\rm Tr}_{L/K}(\alpha_i \alpha_j)$.
Thus the determinant of $\Delta \Delta ^t$ is equal to the discriminant 
of $L$ over $K$, and it is therefore non-zero.  We deduce that the 
matrix $\Delta$ is invertible and that the forms 
$\tilde{x}_1 , \ldots , \tilde{x}_n$ are independent.  Thus, defining 
$\tilde{\underline{x}}_j := \{ \tilde{x}_j \mid x \in \underline{x} \}$ 
for $j \in \{1,\ldots,n\}$, we proved that there is an isomorphism 
$\KS \otimes_K A[\underline{x}_1 , \ldots , \underline{x}_n] 
\simeq \KS \otimes_K A[\tilde{\underline{x}}_1 , \ldots , 
\tilde{\underline{x}}_n]$.

The relative Weil restriction of $X$ may be defined as follows.  
Let $g(\underline{x})$ be an element of the polynomial ring 
$A[\underline{x}]$ contained in the ideal defining $X$; evaluate 
$g(\underline{x})$ substituting for each variable $x \in \underline{x}$ 
the sum $\sum x_i \alpha_i$ and write the resulting polynomial in 
$L \otimes_K A[\underline{x}_1 , \ldots , \underline{x}_n]$ as 
$\sum g_i \alpha_i$, where $g_1 , \ldots , g_n$ are elements of 
$A[\underline{x}_1 , \ldots , \underline{x}_n]$; denote the sequence 
$(g_1 , \ldots , g_n)$ by $\tilde{g}$.  Then the scheme $\Res(X)$ is the 
scheme in ${\rm Spec}(A[\underline{x}_1 , \ldots , \underline{x}_n])$ whose 
ideal is the ideal generated by the elements of $\tilde{g}$, as $g$ varies 
among all the elements of the ideal defining $X$.  It is therefore clear 
that the ideal $I$ defining $\Res(X)$ in $\KS \otimes_K 
A[\underline{x}_1 , \ldots , \underline{x}_n]$ contains, for every 
embedding $\varphi \colon L \to \KS$ fixing $K$, the elements 
$\sum_i g_i \varphi(\alpha_i) = \varphi \bigl( \sum g_i \alpha_i) 
= \varphi(g)$, and conversely that the ideal containing all such elements 
contains $g_1 , \ldots , g_n$ and hence it contains $I$.  Let 
$\mathscr{F} \in A[\underline{x}]$ be a set of generators of the ideal 
of $X$; since, for $i \in \{1,\ldots , n\}$, the scheme $X_i$ is defined 
by $\{\varphi_i(f) \mid f \in \mathscr{F} \}$ in $A[\tilde{\underline{x}}_i]$ 
the result follows.
\end{proof}

\section{The case of curves over an algebraically closed field}

In this section we compute the geometric genus of the relative Weil 
restriction $\mathscr{C}$ of a curve and we also determine a natural 
abelian variety isogenous to a subvariety of the Jacobian of $\mathscr{C}$.  
To calculate the geometric genus in Theorem~\ref{thm:fibre} we may clearly 
assume that the ground field is algebraically closed; the proof reduces 
the computation to the \'etale local case, settled in Lemma~\ref{lem:fibre}.  

\begin{lemma} \label{lem:fibre}
Let $n$ be a positive integer.  Suppose that $k$ is an algebraically 
closed field and that $r_1 , \ldots , r_n$ are positive integers 
relatively prime with the characteristic of $k$.  Let $R$ be the least 
common multiple of $r_1 , \ldots , r_n$ and let 
$C \subset \mathbb{A}_{x,y_1,\ldots,y_n}^{n+1}$ be the affine scheme 
defined by 
\[
C \colon \left\{
\begin{array}{rcl}
y_1^{r_1} & = & x , \\
& \vdots \\
y_n^{r_n} & = & x .
\end{array}
\right.
\]
The scheme $C$ has $\frac{r_1 \cdots r_n}{R}$ irreducible components 
and the morphism induced by the projection onto the $x$-axis from the 
normalization of each component ramifies at the origin to order $R$.
\end{lemma}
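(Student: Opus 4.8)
The plan is to exhibit, for each irreducible component, an explicit normalization by a single parameter~$t$, and to organize the components by an action of the group of $R$-th roots of unity. Since each $r_i$ is coprime to the characteristic of~$k$ and $R$ is their least common multiple, $R$ is coprime to the characteristic as well, so $\mu_m \subset k^*$ has exactly $m$ elements for $m \in \{r_1,\ldots,r_n,R\}$ and the power map $t \mapsto t^R$ is separable. Set $G := \mu_{r_1} \times \cdots \times \mu_{r_n}$, a group of order $r_1 \cdots r_n$. For each $\underline{\zeta} = (\zeta_1,\ldots,\zeta_n) \in G$ I would consider the morphism
\[
\phi_{\underline{\zeta}} \colon \mathbb{A}^1 \longrightarrow \mathbb{A}^{n+1}, \qquad
t \longmapsto \bigl( t^R,\ \zeta_1 t^{R/r_1},\ \ldots,\ \zeta_n t^{R/r_n} \bigr),
\]
which lands in~$C$ because $R/r_i$ is an integer and $(\zeta_i t^{R/r_i})^{r_i} = \zeta_i^{r_i} t^R = t^R$. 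Writing $Z_{\underline{\zeta}}$ for the Zariski closure of its image, each $Z_{\underline{\zeta}}$ is an irreducible curve.

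First I would check that these curves cover~$C$. Given $(a,b_1,\ldots,b_n) \in C$ with $a \neq 0$, choosing $t$ with $t^R = a$ makes each $b_i / t^{R/r_i}$ an $r_i$-th root of unity, so the point lies on some $Z_{\underline{\zeta}}$; the only point with $a = 0$ is the origin, which lies on every $Z_{\underline{\zeta}}$. Hence $C = \bigcup_{\underline{\zeta} \in G} Z_{\underline{\zeta}}$, and since the $Z_{\underline{\zeta}}$ are irreducible of dimension one, the irreducible components of~$C$ are exactly the distinct curves among them. To count these I would determine when $Z_{\underline{\zeta}} = Z_{\underline{\zeta}'}$. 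Replacing $t$ by $\omega t$ with $\omega \in \mu_R$ leaves $t^R$ unchanged and multiplies $\zeta_i t^{R/r_i}$ by $\omega^{R/r_i}$; this defines an action of $\mu_R$ on~$G$ by $\omega \cdot \underline{\zeta} = (\zeta_i \omega^{R/r_i})_i$, and comparing the parametrizations over $a \neq 0$ shows that $Z_{\underline{\zeta}} = Z_{\underline{\zeta}'}$ if and only if $\underline{\zeta}$ and $\underline{\zeta}'$ lie in the same $\mu_R$-orbit. Thus the number of components equals the number of orbits, namely $|G|/|H|$, where $H \subseteq G$ is the image of the homomorphism $\rho \colon \mu_R \to G$, $\omega \mapsto (\omega^{R/r_i})_i$.

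The key arithmetic input, which I expect to be the main point to get right, is that $\gcd(R/r_1,\ldots,R/r_n) = 1$: for any prime $p$ dividing~$R$ there is an index~$j$ with $v_p(r_j) = v_p(R)$ (because $R$ is the least common multiple), and then $v_p(R/r_j) = 0$, so no prime divides all the $R/r_i$. Consequently the kernel of~$\rho$, which equals $\{\omega \in \mu_R \mid \omega^{R/r_i} = 1 \text{ for all } i\}$, is trivial, so $\rho$ is injective and $|H| = R$. This gives exactly $|G|/|H| = \frac{r_1 \cdots r_n}{R}$ irreducible components, proving the first assertion.

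Finally, the injectivity of~$\rho$ shows that $\phi_{\underline{\zeta}}$ is injective over $a \neq 0$, hence birational onto $Z_{\underline{\zeta}}$; moreover it is finite, because the numerical semigroup generated by $R, R/r_1, \ldots, R/r_n$ contains every sufficiently large integer (again since these have greatest common divisor one), so $k[t]$ is finite over the coordinate ring of $Z_{\underline{\zeta}}$. As $\mathbb{A}^1$ is smooth, $\phi_{\underline{\zeta}}$ is the normalization of $Z_{\underline{\zeta}}$. The projection onto the $x$-axis pulls back along $\phi_{\underline{\zeta}}$ to the map $t \mapsto t^R$, whose only point above the origin is $t = 0$; since $R$ is coprime to the characteristic, this map is tamely ramified there of ramification order~$R$, which is the second assertion.
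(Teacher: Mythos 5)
Your proof is correct and follows essentially the same route as the paper: your parametrizations $t \mapsto (t^R, \zeta_1 t^{R/r_1}, \ldots, \zeta_n t^{R/r_n})$ are exactly the orbit maps of the $\mathbb{G}_m$-action the paper puts on $C$, and both arguments hinge on the same key fact that $\gcd(R/r_1,\ldots,R/r_n)=1$, counting components as $\mu_R$-orbits on the $r_1\cdots r_n$ points over $x=1$. The only difference is presentational: you verify explicitly that each parametrization is the normalization, where the paper leaves this implicit.
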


\begin{proof}
Observe that the curve $C$ carries the action $\rho$ of $\mathbb{G}_m$ 
defined by 
\[ 
t \cdot (x,y_1,\ldots,y_n) = (t^R x , t^{R/r_1} y_1 , 
\ldots , t^{R/r_n} y_n).  
\]
First we show that the action $\rho$ has trivial 
stabilizer at all points of $C$ different from the origin.  Indeed, let 
$q=(x,y_1,\ldots,y_n)$ be a point of $C$ different from the origin; it 
follows that all the coordinates of $q$ are non-zero and the equality 
$(x,y_1,\ldots,y_n) = (t^R x , t^{R/r_1} y_1 , \ldots , t^{R/r_n} y_n)$ 
implies that $t$ is a root of unity of order dividing 
$\gcd (R/r_1 , \ldots , R/r_n) = 1$.  Thus the complement of the origin 
in $C$ is a principal homogeneous space for $\mathbb{G}_m$.  There 
are $r_1 \cdots r_n$ points on $C$ such that $x=1$ and these are stabilized 
precisely by the action of the $R$-th roots of unity; we deduce that $C$ 
consists of $\frac{r_1 \cdots r_n}{R}$ irreducible components, each 
isomorphic to closure of the orbit of a point $(1,\eta_1 , \ldots , \eta_n)$, 
where $\eta_i$ is an $r_i$-th root of unity,  for $i \in \{1,\ldots,n\}$.  
Therefore the normalization of $C$ consists of $\frac{r_1 \cdots r_n}{R}$ 
components each mapping to the $x$-axis by 
$(t^R,\eta_1 t^{R/r_1}, \ldots , \eta_n t^{R/r_n}) \mapsto t^R$, as required.
\end{proof}

\begin{theorem} \label{thm:fibre}
Let $k$ be an algebraically closed field and let $n$ be a positive integer.  
Suppose that $S,C_1,\ldots,C_n$ are smooth curves; for 
$i \in \{1,\ldots,n\}$ let $f_i \colon C_i \to S$ be a finite separable 
morphism whose ramification indices are coprime to the characteristic of $k$.  
Let $f$ denote the morphism of the curve 
$C := C_1 \times _S \cdots \times _S C_n$ to $S$, and let $C'$ be the 
normalization of $C$; denote by $g_S$ and $g_{C'}$ the arithmetic genera 
of $S$ and $C'$ respectively.  For any point $p \in C$ and 
any $i \in \{1,\ldots,n\}$ denote by $r_i=r_i(p)$ the ramification index 
of $f_i$ at the point corresponding to $p$ and 
let $R=R(p) := {\rm lcm} \{r_1(p) , \ldots , r_n(p) \}$.  Then the 
curve $C$ is a local complete intersection 
and the curve $C'$ has arithmetic genus
\begin{eqnarray*}
g_{C'} & = & 1 + (g_S-1) \prod_{i=1}^n \deg(f_i) + \frac{1}{2} 
\sum _{p \in C} r_1(p) \cdots r_n(p) \left( 1 - \frac{1}{R(p)} \right) = 
\\[5pt]
& = & 1 + \frac{1}{2} (r - 2g_S - 2) \prod_{i=1}^n \deg(f_i) - \frac{1}{2} 
\sum _{p \in f^{-1} (R_{f})} \frac{r_1(p) \cdots r_n(p)}{R(p)} 
\end{eqnarray*}
where $R_f \subset S$ is the union of the sets of branch points of the 
morphisms $f_1 , \ldots , f_n$ and $r$ is the cardinality of $R_f$.
\end{theorem}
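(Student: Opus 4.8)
The plan is to compute the arithmetic genus of $C'$ via the Riemann--Hurwitz formula applied to the morphism $f' \colon C' \to S$ obtained from $f$ by normalization, reducing the problem to a local ramification computation that is exactly the content of Lemma~\ref{lem:fibre}. First I would check that $C$ is a local complete intersection: since each $f_i \colon C_i \to S$ is a finite separable morphism of smooth curves, it is flat, and the fibered product $C = C_1 \times_S \cdots \times_S C_n$ is cut out inside a smooth ambient scheme by $n$ equations imposing $n$ independent conditions away from the ramification, so $C$ has the expected dimension $1$ and is a local complete intersection. This guarantees that $C$ is Cohen--Macaulay, hence has no embedded points, and its arithmetic genus is well-defined through $\chi(\mathcal{O}_C)$.

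Next I would pass to the local \'etale picture. At a point $p \in C$ lying over $s \in S$, each $f_i$ has ramification index $r_i(p)$, and since these indices are coprime to the characteristic, after an \'etale base change the map $f_i$ looks locally like $y_i \mapsto y_i^{r_i}$ with uniformizer $x$ at $s$; thus the local model of $C$ at $p$ is precisely the scheme of Lemma~\ref{lem:fibre}. That lemma tells us the number of branches (irreducible components through the origin) and, crucially, that the projection from the normalization of each branch ramifies to order $R(p) = \mathrm{lcm}\{r_1(p),\ldots,r_n(p)\}$ over $s$. Counting branches: the total number of points of $C$ over a generic (unramified) $s$ is $\prod_i \deg(f_i)$, while the local computation shows how these coalesce over the branch locus.

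I would then apply Riemann--Hurwitz to $f' \colon C' \to S$. The degree of $f'$ is $\prod_i \deg(f_i)$, giving the main term $2g_{C'} - 2 = (\prod_i \deg f_i)(2g_S - 2) + \deg(\mathfrak{d})$, where $\mathfrak{d}$ is the ramification divisor. The local contribution at each branch through a point $p$ is $R(p) - 1$ by Lemma~\ref{lem:fibre}, and the number of such branches at points over a fixed $s$ is governed by the component count $\frac{r_1\cdots r_n}{R}$ formula. Summing $\frac{r_1(p)\cdots r_n(p)}{R(p)}(R(p)-1) = r_1(p)\cdots r_n(p)(1 - \frac{1}{R(p)})$ over all $p \in C$ (only ramified points contribute, since $R = 1$ elsewhere) yields the ramification degree, and dividing by $2$ gives the first displayed formula after rearranging. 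The second formula follows by substituting $2g_S - 2$ in terms of $r$ and regrouping the sum over the fiber $f^{-1}(R_f)$, using $\sum_{p \text{ over } s} r_1(p)\cdots r_n(p) = \prod_i \deg(f_i)$ for each branch point $s$.

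The main obstacle I anticipate is bookkeeping the local-to-global passage carefully: one must ensure that the branch-counting in Lemma~\ref{lem:fibre} is correctly matched with the fiber structure of $f$, so that the single sum $\sum_{p \in C}$ over scheme-theoretic points of $C$ correctly encodes both the multiplicity $\frac{r_1\cdots r_n}{R}$ of branches and the ramification order $R-1$ per branch. The subtlety is that a point $p \in C$ may itself be a non-reduced or multiple point, and one must confirm that summing the quantity $r_1(p)\cdots r_n(p)(1 - 1/R(p))$ over the \emph{closed points of $C$} (rather than over branches of $C'$) gives the right total ramification degree; this follows because Lemma~\ref{lem:fibre} shows each point $p$ contributes exactly $\frac{r_1\cdots r_n}{R}$ branches each ramified to order $R$, and $\frac{r_1\cdots r_n}{R}(R-1) = r_1\cdots r_n(1 - \frac{1}{R})$.
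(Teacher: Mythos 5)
Your proposal is correct and follows essentially the same route as the paper: exhibit the local model $z_i^{r_i}\varphi_i(z_i)=x$ (giving the local complete intersection claim), pass by a tame \'etale base change to the normal form of Lemma~\ref{lem:fibre}, feed the branch count $\frac{r_1\cdots r_n}{R}$ and per-branch ramification $R-1$ into Riemann--Hurwitz, and regroup using $\sum_{p\in f^{-1}(s)} r_1(p)\cdots r_n(p)=\prod_i\deg(f_i)$ for the second formula. The one step you gloss over that the paper makes explicit is \emph{why} the \'etale base change works: one writes the local equations as $z_i^{r_i}\varphi_i(z_i)=x$ with $\varphi_i$ a unit and adjoins $r_i$-th roots of the $\varphi_i$, which is \'etale precisely because the $r_i$ are prime to the characteristic.
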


\begin{proof}
Let $\pi \colon C' \to S$ be the morphism induced by the structure 
morphism $f \colon C \to S$.  Let $p' \in C'$ be a closed point, let $p$ 
be the corresponding point of $C$ and let $p_1 , \ldots , p_n$ be the 
points of $C_1 , \ldots , C_n$ respectively corresponding to $p$.  
We prove the result by finding a local model of $C$ near $p$ which 
is a local complete intersection, and then applying the Hurwitz formula 
to the morphism induced by $f$ on the normalization of such a model.  
Choosing a local coordinate $x$ on $S$ near $\pi(p')$ we reduce to the 
case in which $S$ is $\mathbb{A}^1$ and $\pi(p') = 0$.  Similarly choose 
local coordinates $z_1 , \ldots , z_n$ on $C_1 , \ldots , C_n$ 
near $p_1 , \ldots , p_n$ respectively.  Thus, near $p$, the curve $C$ 
is defined by 
\[
C \colon \left\{
\begin{array}{rcl}
z_1^{r_1} \varphi_1(z_1) & = & x , \\
& \vdots \\
z_n^{r_n} \varphi_n(z_n) & = & x ,
\end{array}
\right.
\]
where $(\varphi_1 , \ldots , \varphi_n)$ is a rational function on $C$ 
defined and non-zero at $p$, and $r_1 , \ldots , r_n$ are the local 
ramification indices.  In particular, the curve $C$ is a local complete 
intersection near $p$.  Denote by $\mathcal{O}_{p}$ the local ring 
of $C$ near $p$; the base-change defined by the inclusion 
\[
\mathcal{O}_{p} \longrightarrow \mathcal{O}_{p} [t_1 , \ldots , t_n] / 
\bigl( t_1^{r_1} - \varphi_1(z_1) , \ldots , t_n^{r_n} - \varphi_n(z_n) \bigr)
\]
is finite \'etale (of degree $r_1 \cdots r_n$) by the assumption that 
the ramification indices are relatively prime to the characteristic of 
the field $k$.  Hence, each component of the resulting curve is locally 
isomorphic to the curve $C_{p} \subset \mathbb{A}^{n+1}_{x,y_1,\ldots,y_n}$ 
defined by 
\[
C_{p} \colon \left\{
\begin{array}{rcl}
y_1^{r_1} & = & x , \\
& \vdots \\
y_n^{r_n} & = & x ,
\end{array}
\right.
\]
where $y_1 = z_1 t_1 , \ldots , y_n = z_n t_n$.  The morphism induced 
by $\pi$ on $C_{p}$ is the morphism induced by the coordinate $x$.  
Using Lemma~\ref{lem:fibre}, we conclude that the contribution of the 
point $p$ to the Hurwitz formula is 
$\frac{r_1 \cdots r_n}{R} (R - 1) = r_1 \cdots r_n (1 - \frac{1}{R})$, 
and hence we obtain 
\[
2g_C-2 = (2g_S-2) \prod \deg(f_i) + \sum _{p \in C} r_1(p) \cdots r_n(p) 
\left( 1 - \frac{1}{R(p)} \right)
\]
and the first formula follows.  To prove the second one note that the 
quantity $r_1(p) \cdots r_n(p) ({1 - \frac{1}{R(p)}})$ vanishes 
for $p \notin R_f$, since in this case all the local ramification indices 
equal $1$, and that for all points $s$ of $S$ we have 
\[
\sum _{p \in f^{-1}(s)} r_1(p) \cdots r_n(p) 
= \prod _{i=1}^n \sum _{p \in f_i^{-1}(s)} r_i(p) = \prod \deg(f_i) 
\]
and we conclude.
\end{proof}

In the next results, for a projective scheme $Y$, we denote by $\Jac(Y)$ 
the {\em{Jacobian variety of $Y$}}, that is the connected component 
of the identity of the group $\Pic(Y)$.

\begin{lemma} \label{prodo}
Suppose that $X_1 , \ldots , X_n$ are smooth projective varieties defined 
over an algebraically closed field $k$ and let 
$X=X_1 \times \cdots \times X_n$.  For $i \in \{1,\ldots,n\}$ 
let $\rho_i \colon X \to X_i$ denote the canonical projection. The morphism 
\[
\rho_\bullet^* = (\rho_1^* , \ldots , \rho_n^*) \colon \Jac(X_1) 
\times \cdots \times \Jac(X_n) \to \Jac(X)
\]
is an isomorphism.
\end{lemma}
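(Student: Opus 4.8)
The plan is to establish the isomorphism on Jacobians by using the Künneth-type decomposition of the Picard functor for a product of varieties, together with a rigidity argument that pins down the claimed map as the natural one. First I would recall that the Jacobian variety $\Jac(X)$ is the connected component of the identity in $\Pic(X)$, equivalently the group $\Pic^0(X)$ of algebraically trivial line bundles. The heart of the matter is the classical fact that for smooth projective varieties over an algebraically closed field, the Picard scheme of a product decomposes as
\[
\Pic(X_1 \times \cdots \times X_n) \simeq \left( \prod_{i=1}^n \Pic(X_i) \right) \times \mathrm{NS}',
\]
where the complementary factor consists of classes pulled back from products $X_i \times X_j$ via the see-saw/theorem-of-the-cube machinery; crucially, this complementary factor maps to the Néron–Severi group and contributes nothing to the connected component $\Pic^0$. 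Passing to identity components therefore yields
\[
\Pic^0(X_1 \times \cdots \times X_n) \simeq \Pic^0(X_1) \times \cdots \times \Pic^0(X_n),
\]
which is precisely the statement $\Jac(X) \simeq \Jac(X_1) \times \cdots \times \Jac(X_n)$.

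The key steps, in order, are as follows. First I would reduce to the case $n=2$ by induction, writing $X = X_1 \times (X_2 \times \cdots \times X_n)$ and invoking the inductive hypothesis on the second factor. For $n=2$, the central tool is the theorem of the cube (or equivalently the see-saw principle): for a line bundle $\mathcal{L}$ on $X_1 \times X_2$ that is algebraically trivial, one shows that $\mathcal{L}$ is isomorphic to $\rho_1^*(\mathcal{L}|_{X_1 \times \{p_2\}}) \otimes \rho_2^*(\mathcal{L}|_{\{p_1\} \times X_2})$ after normalizing by the restrictions to the "axes" through chosen base points $p_1 \in X_1$, $p_2 \in X_2$. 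The nontrivial biextension term is rigid and hence trivial on the connected component of the identity. This exhibits $\rho_\bullet^*$ as surjective on $\Pic^0$. For injectivity, I would restrict a class $(\mathcal{L}_1,\mathcal{L}_2)$ with $\rho_1^*\mathcal{L}_1 \otimes \rho_2^*\mathcal{L}_2 \cong \mathcal{O}_X$ back to the fibers $X_1 \times \{p_2\}$ and $\{p_1\} \times X_2$, recovering $\mathcal{L}_1$ and $\mathcal{L}_2$ (up to the chosen normalization) as trivial, so the map is injective. Finally, since $\rho_\bullet^*$ is a homomorphism of abelian varieties that is bijective on geometric points, it is an isomorphism of group schemes.

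The main obstacle I expect is controlling the complementary biextension term and justifying that it genuinely drops out on the identity component rather than merely on a quotient. Concretely, one must verify that the "correction" line bundle $\mathcal{M} := \mathcal{L} \otimes \rho_1^*(\mathcal{L}|_{X_1 \times \{p_2\}})^{-1} \otimes \rho_2^*(\mathcal{L}|_{\{p_1\} \times X_2})^{-1}$, which restricts trivially to both axes, is in fact trivial on all of $X_1 \times X_2$. This is exactly where the theorem of the cube enters: a line bundle on a product of complete varieties whose restrictions to $X_1 \times \{p_2\}$, $\{p_1\} \times X_2$ are trivial need not be trivial in general, but becomes so once one knows $\mathcal{L}$ lies in $\Pic^0$ (algebraic equivalence to zero), because the obstruction lives in the discrete group $\mathrm{NS}(X_1) \otimes \mathrm{NS}(X_2)$ and is therefore constant in connected families. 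I would handle this by a seesaw argument over the base $X_2$: the locus in $X_2$ where $\mathcal{M}$ restricts trivially to $X_1 \times \{t\}$ is closed and contains $p_2$, and a rigidity/connectedness argument upgrades this to triviality of $\mathcal{M}$. Once this term is dispatched, the remaining verifications are the routine restriction computations sketched above.
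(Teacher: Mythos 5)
Your proposal is correct and rests on the same underlying idea as the paper's proof: choosing base points $p_i \in X_i$ and restricting to the ``axes'' $X_1\times\{p_2\},\ \{p_1\}\times X_2,\dots$, which is exactly the section $\Jac(X)\to\Jac(X_1)\times\cdots\times\Jac(X_n)$ of $\rho_\bullet^*$ that the paper constructs. You go further than the paper by explicitly supplying the surjectivity step via the see-saw/theorem-of-the-cube argument (the paper simply deduces the isomorphism from the existence of the section, leaving that part implicit), so your write-up is a fleshed-out version of the same argument rather than a different route.
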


\begin{proof}
Choosing a point in each variety $X_1 , \ldots , X_n$ allows us to define, 
for $i \in \{1,\ldots,n\}$, an inclusion $X_i \hookrightarrow X$.  These 
inclusions in turn determine a section $\Jac(X) \to \Jac(X_1) \times 
\cdots \times \Jac(X_n)$ of the morphism $\rho_\bullet^*$.  We deduce 
that $\rho_\bullet^*$ is indeed an isomorphism of Jacobian varieties 
$\Jac(X_1) \times \cdots \times \Jac(X_n) \simeq \Jac(X)$.
\end{proof}

\begin{remark}
Maintaining the notation of the previous lemma, the morphisms 
$\rho_1,\ldots,\rho_n$ also induce a homomorphism $\psi^* \colon \Pic(X_1) 
\times \cdots \times \Pic(X_n) \to \Pic(X)$.  The morphism $\psi$ though 
need not be an isomorphism.  Since $\psi$ induces an isomorphism on the 
connected component of the identity, it factors through the N\'eron-Severi 
group and in particular its cokernel is a finitely generated abelian group.  
For instance, if $E$ is an elliptic curve and $X_1=X_2=E$, then the three 
classes $\{0\} \times E$, $E \times \{0\}$ and the diagonal are independent 
in ${\rm NS}(E \times E)$ so that ${\rm NS}(E \times E) \supset \mathbb{Z}^3$.
On the other hand, the group ${\rm NS}(E) \times {\rm NS}(E)$ is isomorphic 
to $\mathbb{Z}^2$.
\end{remark}

For the next theorem we need to introduce some notation.  Suppose that 
$B,X_1 , \ldots , X_n$ are smooth projective varieties defined over an 
algebraically closed field $k$.  For each $i \in \{1,\ldots,n\}$ 
let $f_i \colon X_i \to B$ be a finite morphism.  Denote by $X$ the product 
$X_1 \times \cdots \times X_n$, by $X_B$ the fibered product 
$X_1 \times _B \cdots \times _B X_n$ and by $\iota \colon X_B \to X$ the 
natural inclusion.  For each $i \in \{1,\ldots,n\}$ 
\begin{itemize}
\item 
let $d_i$ denote the degree of $f_i$, 
\item 
let $\rho_i \colon X \to X_i$ and $\pi_i=\rho_i \circ \iota \colon X_B 
\to X_i$ be the canonical projections, 
\item 
let $\pi \colon X \to B$ be the composition $f_1 \circ \pi_1 = \cdots 
= f_n \circ \pi_n$, 
\item 
let $d = d_1 \cdots d_n$ denote the degree of $\pi$.
\end{itemize}
We summarize the notation in the case $n=2$ in~\eqref{diauno}.
\begin{equation} \label{diauno}
\begin{array}{c}
X := X_1 \times X_2 \\[25pt]
X_B := X_1 \times _B X_2 \\
\vphantom{\begin{array}{c}W\\W\\W\\W\\W\end{array}}
\end{array}
\hspace{20pt}
\begin{minipage}{150pt}
\xymatrix{
 & X_1 \times X_2 \ar[dddl] _{\rho_1} \ar[dddr] ^{\rho_2} \\ \\
 & {\hphantom{{}_B} X_1} \times_B X_2 \ar[uu] _\iota \ar[dl] ^{\pi_1} 
\ar[dd] ^{\pi} 
 \ar[dr] _{\pi_2} \\
X_1 \ar[dr] ^{f_1} & & X_2 \ar[dl] _{f_2} \\
 & B
}
\end{minipage}
\hphantom{\hspace{20pt}C}
\end{equation}

\begin{warning}
Even though the diagram may suggest it, the identities $f_1 \circ \rho_1 
= \cdots = f_n \circ \rho_n$ {\em{do not}} hold necessarily.  On the other 
hand, the identities $f_1 \circ \pi_1 = \cdots = f_n \circ \pi_n = \pi$ hold.
\end{warning}

Let $\phi \colon \Jac(X) \to \Jac(X)$ be the isogeny defined by 
\[
\begin{array}{ccl}
\phi \colon \quad \Jac(X) & \longrightarrow & \Jac(X) \\[5pt]
\rho_1^*D_1 + \cdots + \rho_n^*D_n & \longmapsto & \frac{d}{d_1}D_1 
+ \cdots + \frac{d}{d_n}D_n ,
\end{array}
\]
where we used the identification of $\Jac(X)$ with $\Jac(X_1) 
\times \cdots \times \Jac(X_n)$ of Lemma~\ref{prodo}.

Let $M'$ denote the kernel of the multiplication map $\Jac(B)^n \to \Jac(B)$, 
so that $M' \simeq \Jac(B)^{n-1}$; define the group $M$ as the image 
of $M'$ under the morphism 
\[
\begin{array}{rcl}
\Jac(B)^n & \longrightarrow & \Jac(X) \\[7pt]
(D_1,\ldots,D_{n-1}) & \longmapsto & \rho_1^*f_1^*D_1 + \cdots 
+ \rho_n^*f_n^*D_n.
\end{array}
\]
By construction, the group $M$ is connected and contained in the kernel 
of $\iota^*$.  Moreover, it follows from Lemma~\ref{prodo} and the 
fact that the morphisms $f_1,\ldots,f_n$ are finite that the morphism 
$M' \to M$ is finite and hence that the dimension of the group $M$ 
is $(n-1) \dim (\Jac(B))$.

\begin{theorem} \label{thm:jaco}
Maintaining the notation introduced above, the group $M$ has finite 
index in $\ker(\iota^*)$.  More precisely, for each element $D$ 
of $\ker(\iota^*)$ there are elements $D_1,\ldots,D_n$ of $\Jac(B)$ 
such that the identities 
\[
\begin{array}{rcll}
\phi(D) & = & \sum_i \rho_i^*f_i^*D_i & \quad {\textrm{and}} \\[7pt]
d \sum_i D_i & = & 0
\end{array}
\]
hold.  In particular, the dimension of the kernel of $\iota^*$ is 
$(n-1) \dim (\Jac(B))$.
\end{theorem}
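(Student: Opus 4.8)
The plan is to parametrize $\ker(\iota^*)$ by means of Lemma~\ref{prodo} and then to read off the two asserted identities by pushing the defining relation forward along the projections $\pi_i$ and along $\pi$. First I would write $D=\rho_1^*A_1+\cdots+\rho_n^*A_n$ with $A_i\in\Jac(X_i)$, which is possible by Lemma~\ref{prodo}. Since $\iota^*\rho_i^*=\pi_i^*$, the condition $D\in\ker(\iota^*)$ is the single relation $\sum_i\pi_i^*A_i=0$ in $\Jac(X_B)$, and I will extract everything from this relation using only the functoriality of flat pullback and finite (norm) pushforward of divisor classes on $X_B$.

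Next I would record the relevant pushforward formulas. The projection $\pi_i\colon X_B\to X_i$ is finite of degree $d/d_i$, so $\pi_{i*}\pi_i^*$ is multiplication by $d/d_i$ on $\Jac(X_i)$; and since $\pi=f_i\circ\pi_i$, pushing forward along $\pi$ gives $\pi_*\pi_i^*=f_{i*}\pi_{i*}\pi_i^*=\tfrac{d}{d_i}f_{i*}$. For the cross terms with $j\neq i$ I factor $\pi_i=q_i\circ p$ and $\pi_j=q_j\circ p$, where $p\colon X_B\to X_i\times_B X_j$ is the projection and $q_i,q_j$ are the two projections of $X_i\times_B X_j$; because $p$ is finite of degree $d/(d_id_j)$ and the square formed by $q_i,q_j,f_i,f_j$ is Cartesian, the base-change identity $f_i^*f_{j*}=q_{i*}q_j^*$ yields
\[
\pi_{i*}\pi_j^*=\tfrac{d}{d_id_j}\,q_{i*}q_j^*=\tfrac{d}{d_id_j}\,f_i^*f_{j*}\qquad(j\neq i).
\]
I expect this base-change step to be the only genuine difficulty; the flatness it requires holds because a finite morphism of smooth varieties is flat.

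Applying $\pi_{i*}$ to $\sum_j\pi_j^*A_j=0$ then gives $\tfrac{d}{d_i}A_i=-\sum_{j\neq i}\tfrac{d}{d_id_j}f_i^*f_{j*}A_j=f_i^*D_i$, where $D_i:=-\sum_{j\neq i}\tfrac{d}{d_id_j}f_{j*}A_j\in\Jac(B)$ and the coefficients $\tfrac{d}{d_id_j}=\prod_{l\neq i,j}d_l$ are integers. Since $\phi(D)=\sum_i\rho_i^*\bigl(\tfrac{d}{d_i}A_i\bigr)$, this is exactly the first identity $\phi(D)=\sum_i\rho_i^*f_i^*D_i$. Applying instead $\pi_*$ to the same relation produces $\sum_i\tfrac{d}{d_i}f_{i*}A_i=0$, whence
\[
\sum_i d_iD_i=-\sum_i\sum_{j\neq i}\tfrac{d}{d_j}f_{j*}A_j=-(n-1)\sum_j\tfrac{d}{d_j}f_{j*}A_j=0,
\]
which is the second identity.

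Finally, for the index and dimension statements, let $\beta\colon\Jac(B)^n\to\Jac(X)$ send $(E_1,\dots,E_n)$ to $\sum_i\rho_i^*f_i^*E_i$. The two identities show that $\phi(\ker(\iota^*))\subseteq\beta(N)$, where $N=\{(E_i):\sum_i d_iE_i=0\}$ has dimension $(n-1)\dim\Jac(B)$; as $\phi$ is an isogeny this forces $\dim\ker(\iota^*)\le(n-1)\dim\Jac(B)$. Combined with the inclusion $M\subseteq\ker(\iota^*)$ and the equality $\dim M=(n-1)\dim\Jac(B)$ established before the theorem, this gives $\dim\ker(\iota^*)=(n-1)\dim\Jac(B)$. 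Since $M$ is connected and closed of this dimension, it is the identity component of $\ker(\iota^*)$, hence of finite index, and the dimension formula follows at once.
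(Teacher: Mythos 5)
Your argument is, in all but one place, the paper's own proof carried out in slightly more detail: the same identification of $\Jac(X)$ with $\prod_i\Jac(X_i)$ via Lemma~\ref{prodo}, the same push--pull identities $\pi_{i*}\pi_i^{*}=\tfrac{d}{d_i}$ and $\pi_{i*}\pi_j^{*}=\tfrac{d}{d_id_j}f_i^{*}f_{j*}$ (which the paper dismisses as ``easy to check'' and you justify by flat base change on $X_i\times_BX_j$), the same application of $\pi_{i*}$ to the relation $\sum_j\pi_j^{*}A_j=0$ to produce the first identity, and essentially the same dimension count at the end. The one point where you diverge is the second identity, and there you have a genuine problem: what you prove is $\sum_i d_iD_i=0$, and you then declare that this ``is the second identity.'' It is not. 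The theorem asserts $d\sum_iD_i=0$, and the two statements coincide only when the degrees $d_i$ are all equal. Your own explicit formula $D_i=-\sum_{j\neq i}\tfrac{d}{d_id_j}f_{j*}A_j$ shows the difference concretely: for $n=2$ with $f_1$ of degree $1$, $f_2$ of degree $2$, and $D=\rho_1^{*}A-\rho_2^{*}f_2^{*}A\in\ker(\iota^{*})$, one gets $D_1=2A$ and $D_2=-A$ up to the finite group $\ker(f_2^{*})$, so that $\sum_id_iD_i=0$ but $d\sum_iD_i=2A$, which is nonzero whenever $A$ is non-torsion; since the $D_i$ are only adjustable by the finite groups $\ker(f_i^{*})$, no other choice repairs this. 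So you must either prove the identity in the printed normalization or explain why your normalization is the one that actually holds --- you cannot silently substitute one for the other.

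For comparison, the paper reaches $d\sum_iD_i=0$ by observing that if $\sum_i\rho_i^{*}f_i^{*}D_i$ lies in $\ker(\iota^{*})$ then $\sum_iD_i\in\ker(\pi^{*})$ and $\pi_{*}\pi^{*}=d$; this is correct as a conditional, but it is applied to $\phi(D)$, and the hypothesis $\phi(D)\in\ker(\iota^{*})$ is exactly the condition $\sum_iD_i\in\ker(\pi^{*})$ that fails in the example above. In other words, your computation is the sound one and the printed form of the second identity is the suspect one; but as written your proof both fails to establish the stated claim and mislabels what it does establish. None of this affects the parts of the theorem that are actually used later: either linear condition on $(D_1,\ldots,D_n)$ cuts out a subgroup of $\Jac(B)^n$ of dimension $(n-1)\dim\Jac(B)$, so your final paragraph --- $\phi(\ker(\iota^{*}))$ lands in the image of that subgroup, hence $\dim\ker(\iota^{*})\le(n-1)\dim\Jac(B)$, with equality forced by $M\subseteq\ker(\iota^{*})$, whence $M$ is the identity component and has finite index --- is correct and matches the paper's (terser) conclusion.
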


\begin{proof}
By Lemma~\ref{prodo}, the morphism 
\[
\begin{array}{ccl}
\Jac(X_1) \times \cdots \times \Jac(X_n) & \longrightarrow & \Jac(X) \\[5pt]
(D_1 , \ldots , D_n) & \longmapsto & \rho_1^* D_1 + \cdots + \rho_n^* D_n
\end{array}
\]
is an isomorphism.  Thus we identify the divisor classes in $\Jac(X)$ 
with $n$-tuples of divisor classes, one in each Jacobian variety 
$\Jac(X_1), \ldots , \Jac(X_n)$.  Let $i,j$ be distinct indices in 
$\{1,\ldots,n\}$ and let $P$ be a divisor on $X_i$; it is easy to check 
that the identities 
\begin{eqnarray*}
\pi_i{}_*\pi_i^*(P) & = & \frac{d}{d_i} P \, , 
\\[5pt]
\pi_i{}_*\pi_j^*(P) & = & \frac{d}{d_i d_j} f_i^*f_j{}_*(P) 
\end{eqnarray*}
hold, and hence the class of the divisor $\pi_i{}_*\pi_j^*(P)$ is 
contained in $f_i^* \Pic(B)$.  Suppose that $D=\rho_1^*D_1+\cdots+\rho_n^*D_n$ 
is a divisor on $X$ representing an element of $\Jac(X)$.  Let $i$ be an 
index in $\{1,\ldots,n\}$; we have 
\[
\pi_i{}_* \iota^* (D) = \pi_i{}_* (\pi_1^*D_1+\cdots+\pi_n^*D_n) 
\in \frac{d}{d_i}D_i + f_i^* \Jac(B)
\]
and summing over all indices $i$, we find the equivalence 
\[
\begin{array}{rcl}
\displaystyle \sum _i \pi_i{}_* \iota^* (D) & \equiv & \displaystyle 
\sum_i \frac{d}{d_i}D_i \\[15pt]
& \equiv & \displaystyle \phi(D) \pmod{f_1^*\Jac(B) \times \cdots 
\times f_n^* \Jac(B)} .
\end{array}
\]
In particular, if $D$ is contained in the kernel of $\iota^*$, then 
$\phi(D)$ is contained in $f_1^*\Jac(B) \times \cdots \times f_n^* \Jac(B)$, 
establishing the first of the two identities.  Finally, let 
$D_1 , \ldots , D_n \in \Jac(B)$ be divisor classes such that the 
element $\rho_1^*f_1^*D_1 + \cdots + \rho_n^*f_n^*D_n$ of $\Jac(X)$ lies 
in $\ker (\iota^*)$.  Then, the element $D_1 + \cdots + D_n$ of $\Jac(B)$ 
lies in the kernel of $\pi^*$, so that $d(D_1 + \cdots + D_n) 
= \pi_* \pi^*(D_1 + \cdots + D_n) = 0$, proving the second identity.

It follows from what we just proved that the equalities 
\[
\dim (\ker(\iota^*)) = \dim M = ({n-1}) \dim (\Jac(B))
\]
hold, proving the final assertion of the theorem.
\end{proof}

\section{Mordell-Weil groups and relative Weil restriction}

From now on, we shall be in the following set up (specializing the 
assumptions of Lemma~\ref{lem:rwr}): 
\begin{itemize}
\item 
$L$ is a number field and $S':=\Spec(L)$, 
\item 
$K \subset L$ is a subfield and $S:=\Spec(K)$, 
\item 
$C$ is a smooth projective curve over $L$, 
\item 
$B$ is a smooth projective curve over $K$, and 
\item 
$h \colon C \to B_{S'}=B_L$ is a finite morphism.
\end{itemize}
To simplify the notation, for any variety $Z$ defined over a number 
field $k$ denote by $mw_k(Z)$ the rank of the Mordell-Weil group of 
the Jacobian of $Z$; we are only going to apply this notation with 
$k \in \{K,L\}$ to varieties $Z$ that are either reduced curves or 
products of smooth integral curves.

\begin{theorem} \label{cocha}
Suppose that $C$ is a smooth projective curve defined over a number 
field $L$.  Suppose that $B$ is a smooth projective curve defined 
over a subfield $K$ of $L$, and let $h \colon C \to B_L 
:= B \times_{\Spec(K)} \Spec(L)$ be a finite morphism.  Denote by $n$ 
the dimension of $L$ as a vector space over $K$ and by $g(C)$ and $g(B)$ 
the genera of $C$ and $B$ respectively.  The Jacobian of $\Res_h(C)$ 
contains an abelian subvariety $F$ of dimension $n g(C) - (n - 1) g(B)$ 
defined over $K$ and with Mordell-Weil group over $K$ of rank 
$mw_L(C) - \bigl( mw_L (B_L) - mw_K(B) \bigr)$.
\end{theorem}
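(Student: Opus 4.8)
The plan is to realize $F$ as the image of a pullback map defined over $K$, and to extract both its dimension and its Mordell--Weil rank from Theorem~\ref{thm:jaco} together with the behaviour of Mordell--Weil ranks under Weil restriction. First I would set $W := \Wr_{L/K}(C)$, the ordinary Weil restriction of the curve $C$; it is a smooth projective $K$-variety with $W_{\KS} \simeq C_1 \times \cdots \times C_n$, where $C_1,\ldots,C_n$ are the Galois conjugates of $C$ as in Lemma~\ref{profi}. By that same lemma $\Res_h(C)_{\KS} \simeq C_1 \times_B \cdots \times_B C_n$, and the natural $K$-morphism $\iota \colon \Res_h(C) \to W$ furnished by diagram~\eqref{dere} base-changes to the inclusion of the fibered product into the product studied in Theorem~\ref{thm:jaco}. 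Pulling back divisor classes gives a homomorphism of $K$-group varieties $\iota^* \colon \Jac(W) \to \Jac(\Res_h(C))$, and I would define $F := \iota^*\bigl(\Jac(W)\bigr)$. This is an abelian subvariety of $\Jac(\Res_h(C))$, since the image of the abelian variety $\Jac(W)$ under a homomorphism is complete and connected, even though $\Jac(\Res_h(C))$ itself need only be semiabelian.

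Next I would identify the abelian varieties entering the computation in terms of Weil restrictions, in order to compute their ranks over $K$. Since Weil restriction commutes with the formation of $\Pic^0$, there is a $K$-isomorphism $\Jac(W) \simeq \Wr_{L/K}(\Jac(C))$; in particular $\Jac(W)(K) = \Jac(C)(L)$, so that $mw_K(\Jac(W)) = mw_L(C)$. The dimension of $F$ is then immediate: by Lemma~\ref{prodo} the geometric Jacobian $\Jac(W)_{\KS}$ is $\prod_i \Jac(C_i)$, of dimension $n\,g(C)$, while Theorem~\ref{thm:jaco} applied over $\KS$ gives $\dim\ker(\iota^*) = (n-1)g(B)$; hence $\dim F = n\,g(C) - (n-1)g(B)$, and $F$ is defined over $K$ because $\iota^*$ is.

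For the rank I would use that Mordell--Weil rank is additive in short exact sequences of abelian varieties over a number field, an immediate consequence of Poincar\'e complete reducibility (which splits such a sequence up to isogeny) together with the isogeny-invariance of the rank. Writing $M := \ker(\iota^*)^0$, the sequence $0 \to M \to \Jac(W) \xrightarrow{\iota^*} F \to 0$ gives $mw_K(F) = mw_L(C) - mw_K(M)$, so it remains to show $mw_K(M) = mw_L(B_L) - mw_K(B)$. Here I would invoke the description of $M$ in Theorem~\ref{thm:jaco}: it is the image, under $(D_1,\ldots,D_n) \mapsto \sum_i \rho_i^* f_i^* D_i$, of the subvariety $M' = \ker\bigl(\Jac(B)^n \xrightarrow{\Sigma} \Jac(B)\bigr)$, with $M' \to M$ finite. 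The crucial $K$-rationality observation is that, since $B$ is already defined over $K$, the factor-permuting Galois action identifies $\Jac(B)^n$ with $\Wr_{L/K}(\Jac(B)_L)_{\KS}$ and the summation map $\Sigma$ with the base change of the trace morphism $\mathrm{tr} \colon \Wr_{L/K}(\Jac(B)_L) \to \Jac(B)$; thus $M'$ descends to $K$ as $\ker(\mathrm{tr})$, and the map $(D_i) \mapsto \sum_i \rho_i^* f_i^* D_i$ is Galois-equivariant, so that $M$ and the isogeny $M' \to M$ are all defined over $K$. Applying additivity of rank to $0 \to M' \to \Wr_{L/K}(\Jac(B)_L) \xrightarrow{\mathrm{tr}} \Jac(B) \to 0$, and using $\Wr_{L/K}(\Jac(B)_L)(K) = \Jac(B)(L)$, yields $mw_K(M) = mw_K(M') = mw_L(B_L) - mw_K(B)$, which combines with the previous identity to give the stated formula.

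I expect the main obstacle to lie in the $K$-rationality bookkeeping of the last paragraph: one must check that all the geometric identifications underlying Theorem~\ref{thm:jaco} (the pullback $\iota^*$, the subvariety $M$, and especially the recognition of $M'$ and $\Sigma$ as the kernel and trace of a Weil restriction) are genuinely Galois-equivariant, so that the rank-additivity argument can be run over $K$ rather than merely over $\KS$. Once this descent is in place, the dimension count and the two applications of additivity of Mordell--Weil rank are routine.
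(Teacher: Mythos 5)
Your proposal is correct and follows essentially the same route as the paper: the paper also takes $F$ to be the (connected component of the) image of $\iota^* \colon \Jac(\Wr(C)) \to \Jac(\Res_h(C))$, realizes your $M'$ as the kernel of the pullback $\kappa^*\colon \Jac(\Wr(B_L)) \to \Jac(B)$ along the canonical $K$-morphism $\kappa \colon B \to \Wr(B_L)$ (which is exactly your trace map, already defined over $K$), and invokes Theorem~\ref{thm:jaco} for both the finite-index statement and the dimension count. The only cosmetic differences are that you make the rank-additivity via Poincar\'e reducibility and the Galois-descent of $M'$ explicit, whereas the paper builds these objects over $K$ from the outset via the functoriality of the Weil restriction and uses the identification $\Jac(\Wr(D)) \simeq \Wr(\Jac(D))$ to convert $K$-ranks to $L$-ranks.
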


\begin{proof}
The $L$-morphism $h \colon C \to B_L$ induces a $K$-morphism 
$\Wr(C) \to \Wr(B_L)$ which in turn induces a pull-back $K$-morphism 
$\Jac(\Wr(B_L)) \to \Jac(\Wr(C))$.  Furthermore, from the inclusion 
$\iota \colon \Res_h(C) \subset \Wr(C)$ we obtain a sequence of $K$-morphisms
\[
\Jac(\Wr(B_L)) \longrightarrow \Jac(\Wr(C)) 
\stackrel{\iota^*}{\longrightarrow} \Jac(\Res_h(C)) .
\]
From the representability of 
$\Wr(B_L)$, there is a $K$-morphism $\kappa \colon B \to \Wr(B_L)$ 
associated to the identity $B_L \to B_L$ using the bijection 
$\Hom_K(B,\Wr(B_L)) = \Hom_L(B_L,B_L)$.  The morphism $\kappa$ induces 
a pull-back $K$-morphism $\kappa^* \colon \Jac(\Wr(B_L)) \to \Jac(B)$; 
we denote by $M$ the kernel of the morphism of $\kappa^*$.  Geometrically, 
the Jacobian of $\Wr(B_L)$ is isomorphic to the product of $n$ copies 
of the Jacobian of $B$ and the morphism $\kappa^*$ corresponds to the 
addition of the line bundles in the various components using the 
isomorphisms between them (defined over an algebraic closure of $K$).  
We obtain that the group $M$ is the specialization to our setting of 
the group denoted also by $M$ in Theorem~\ref{thm:jaco}.  Therefore $M$ is 
geometrically isomorphic to $\Jac(B)^{n-1}$, and it is connected of 
dimension $(n-1) g(B)$.  Thus we obtain the diagram 
\[
\xymatrix{
M \ar@{^(->}[d] \ar[r] & \Jac(\Wr(C)) \ar[r]^{\iota^*} & \Jac(\Res_h(C)) \\
\Jac(\Wr(B_L)) \ar[d]^{\kappa^*} \ar[ur] \\
\Jac(B)
}
\]
of $K$-morphisms and it follows from Theorem~\ref{thm:jaco} that the 
group $M$ has finite index in $\ker(\iota^*)$.  
We let $F$ be the connected component of the identity of the image 
of $\iota^*$ and we show that it has the required properties.  First 
of all, $F$ is an abelian variety over $K$, isogenous over $K$ 
to $\Jac(\Wr(C))/M$, and hence the dimension of $F$ is 
\[
\dim (F) = \dim \left( \frac{\Jac(\Wr(C))}{M} \right) = n g(C) - (n-1) g(B) 
\]
as needed.  Next, we prove the statement about the Mordell-Weil rank 
of $F$.  For a curve $D$ defined over $L$ we have 
\begin{eqnarray*}
mw_K \bigl( \Wr(D) \bigr) & = 
& \rk \Bigl( \Jac \bigl( \Wr(D) \bigr) (K) \Bigr) \\
& = & \rk \biggl( \Hom_K \Bigl( \Spec(K) , 
\Jac \bigl( \Wr(D) \bigr) \Bigr) \biggr) \\
& = & \rk \biggl( \Hom_K \Bigl( \Spec(K) , 
\Wr \bigl( \Jac (D) \bigr) \Bigr) \biggr) \\
& = & \rk \Bigl( \Hom_L \bigl( \Spec(L) , \Jac (D) \bigr) \Bigr) \\
& = & mw_L(D) .
\end{eqnarray*}
Since the abelian varieties $F$ and $\Jac(\Wr(C))/M$ are $K$-isogenous, 
the ranks of their Mordell-Weil groups are the same.  By the previous 
computation we conclude that 
\begin{eqnarray*}
\rk \bigl( F(K) \bigr) & = & \rk 
\left( \frac{\Jac \bigl( \Wr(C) \bigr)}{M} (K) \right) \\
& = & mw_K \bigl( \Wr(C) \bigr) - \Bigl( mw_K \bigl( \Wr (B_L) \bigr) 
- mw_K(B) \Bigr) \\
& = & mw_L(C) - \bigl( mw_L (B_L) - mw_K(B) \bigr) ,
\end{eqnarray*}
as required, and the result follows.
\end{proof}

The theorem just proved opens the way to applications of the Chabauty 
method to find the $L$-rational points of the curve $C$ with $K$-rational 
image in $B$.

We show how this method works on an example.  

\begin{example}
Let $d$ be a squarefree integer; we let $K = \mathbb{Q}$ and 
$L = \mathbb{Q}(\sqrt{d})$.  Denote by $g(x) \in \mathbb{Q}[x]$ the polynomial 
\[
g(x) = x^3 + a x + b 
\]
and by $f(x) \in \mathbb{Q}(\sqrt{d})[x]$ the polynomial 
\[
f(x) = g(x^2+\sqrt{d}) . 
\]
Let $E$ be the elliptic curve over $\mathbb{Q}$ with Weierstrass 
equation $y^2=g(x)$ and let $C$ be the smooth projective model over 
$\mathbb{Q}(\sqrt{d})$ of the genus two hyperelliptic curve with 
affine equation $y^2=f(x)$.  By construction, there is a morphism 
$\phi \colon C \to E$ given by 
\[
\begin{array}{rcl}
\phi \colon C & \longrightarrow & E
\\[5pt]
(x,y) & \longmapsto & ( x^2 + \sqrt{d} , y ).
\end{array}
\]
Suppose we wish to find all points $P$ in 
$C\bigl( \mathbb{Q}(\sqrt{d}) \bigr) $ such that $\phi(P)$ is in 
$E(\mathbb{Q})$.  Such points are the rational points of the curve 
$D = \Res_\phi(C)$ over $\mathbb{Q}$, for which we now determine an 
explicit model.  Let $x =  x_1 +  x_2 \sqrt{d}$ and $y = y_1 + y_2 \sqrt{d}$, 
where $ x_1, x_2,y_1,y_2$ are $\mathbb{Q}$-rational variables. 
Substituting $x = x_1+x_2\sqrt{d}$ and $y = y_1+y_2\sqrt{d}$ in the 
polynomial defining $C$ we find the polynomial 
\[
r(x_1,x_2,y_1,y_2) = (y_1 + y_2 \sqrt{d})^2 - f(x_1 +  x_2 \sqrt{d})
\]
in $\mathbb{Q}(\sqrt{d})$, whose vanishing represents the condition 
that the point $P = ( x_1 +  x_2 \sqrt{d}, y_1 + y_2 \sqrt{d})$ lies 
on $C$.  Define polynomials with rational coefficients 
\[
r_1 := \frac{r + \overline{r}}{2}
\quad {\textrm{and}} \quad 
r_2 := \frac{r - \overline{r}}{2 \sqrt{d}}
\]
where $\overline{r}$ denotes the polynomial obtained from $r$ by applying 
the nontrivial element of the Galois group of 
$\mathbb{Q}(\sqrt{d})/\mathbb{Q}$; we have the identity 
$r = r_1 + \sqrt{d} r_2$.  Thus, the two equations 
$r_1=r_2=0$ in $ x_1,y_1, x_2,y_2$ correspond to $P$ lying on $C$; 
these are also the equations defining (an affine model of) the Weil 
restriction of $C$ from $\mathbb{Q}(\sqrt{d})$ to $\mathbb{Q}$.  To 
determine $D = \Res_\phi(C)$, we also wish $\phi(P)$ to be in 
$E(\mathbb{Q})$. Note that the coordinates of $\phi(P)$ are 
\[
\phi(P) = \bigl( x_1^2 + d x_2^2 
+ \sqrt{d} (2 x_1 x_2 + 1) , y_1+\sqrt{d} y_2 \bigr)
\]
and the condition that the point $\phi(P)$ lies in $\mathbb{Q}$ translates 
to the equations $2x_1 x_2 + 1 = y_2 = 0$.  We have therefore obtained 
the four equations 
\begin{equation} \label{rima}
\left\{
\begin{array}{rcl}
y_1^2 & = & (x_1^6 + d^3 x_2^6) + 3(x_1^2 + d x_2^2) 
\bigl( 5 d x_1^2 x_2^2 + 4 d x_1 x_2 + d + \frac{a}{3} \bigr) + b
\\[7pt]
0 & = & (2 x_1 x_2+1) (3  x_1^4+10  x_1^2  x_2^2 d+4  x_1  x_2 d
+3  x_2^4 d^2+a+d) 
\\[7pt]
0 & = & 2 x_1 x_2+1 
\\[5pt]
y_2 & = & 0 
\end{array}
\right.
\end{equation}
in the variables $ x_1,y_1, x_2,y_2$.  But of course the second equation is 
divisible by the third equation, and we may ignore it (this is not a 
coincidence, but it is a consequence of the fact that the curve $E$ is 
defined over $\mathbb{Q}$). 
Multiplying the first equation in~\eqref{rima} by $2^{12}x_1^6$, we can 
use the relation $2  x_1  x_2=-1$ to eliminate the variable $x_2$, obtaining 
the single equation $(2^6x_1^3 y_1)^2 = \overline{\rho} (x_1)$ in $x_1, y_1$ 
for $D$.  After the birational substitution $x=2x_1$ and $y=2^6x_1^3 y_1$, 
we obtain that the curve $D$ is birational to the genus 5 curve with equation
\[
D \colon y^2 = x^{12} + 4(3 d + 4 a) x^8 + 64 b x^6 
+ 16d (3 d + 4 a) x^4 + 64 d^3 .
\]
The curve $D$ admits the non-constant map $(x,y) \mapsto (x^2,y)$ 
to the genus 2 curve $F$ with equation 
\[
F \colon y^2 = x^6 + 4(3 d + 4 a) x^4 + 64 b x^3 + 16d (3 d + 4 a) x^2 
+ 64 d^3.
\]
Summarizing, the curve $D$ is a genus 5 curve defined over $\mathbb{Q}$ 
and it admits two morphisms defined over $\mathbb{Q}(\sqrt{d})$ to the curves 
\[
C \colon y^2=f(x) = g(x^2+\sqrt{d}) \quad {\textrm{ and }} 
\quad \overline{C} \colon y^2=\overline{f}(x) = g(x^2-\sqrt{d}) .
\]
Each of the two curves $C$ and $\overline{C}$ admits a 
$\mathbb{Q}(\sqrt{d})$-morphism to the elliptic curve $E$, and the 
corresponding two compositions $D \to E$ coincide, and are defined 
over $\mathbb{Q}$.  We deduce that the Jacobians of $C$ and of 
$\overline{C}$ are contained, up to isogeny, in the Jacobian of $D$, 
and they have an isogenous copy of $E$ in common.  This implies that the 
five-dimensional Jacobian of $D$ contains a further two-dimensional 
abelian variety: up to isogeny this is the Jacobian of the curve $F$.

It is now easy to provide examples where the inequality of the Chabauty 
method is not satisfied for the curve $C$, nor for the curve $E$, but it 
satisfied for the curve $F$, so that we can still apply the Chabauty 
method to find the points on $D$.
For example, setting $a = 1$, $b = 3$, and $d = 13$, we find 
$\rk \bigl( E(\mathbb{Q}) \bigr) = 1$ and 
$\rk \bigl( \Jac(F)(\mathbb{Q}) \bigr) = 1$, and moreover 
\[
\rk \left( \Jac(C) \bigl( \mathbb{Q}(\sqrt{d}) \bigr) \right) 
\geq \rk \left( E \bigl( \mathbb{Q}(\sqrt{d}) \bigr) \right) \geq 2 .
\]
Thus, in this example the Chabauty method is not applicable to $C$ or $E$, 
but it is only applicable to $F(\mathbb{Q})$.
\end{example}

\section{Cases where Faltings' Theorem does not apply}

In this section we analyze the cases where the relative Weil restriction 
of a morphism of curves contains a component of geometric genus at 
most one.  In such cases, Faltings' Theorem cannot be applied to deduce 
the finiteness of rational points of the relative Weil restriction and we 
find explicit non-tautological examples in which these sets of rational 
points are infinite.  The following remark is an immediate consequence 
of Faltings' Theorem and guides the choice of cases we handle in this section.

\begin{remark} \label{gamu}
Let $L/K$ be an extension of number fields, and suppose that $C$ is a curve 
defined over $L$, $B$ is a curve defined over $K$ and $h \colon C \to B$ 
is a non-constant morphism.  If the curve $\Res_h(C)$ has infinitely many 
$K$-rational points, then the genus of $C$ is at most one.
\end{remark}

In the case of the relative Weil restriction of a morphism from a curve 
of genus one, we completely characterize the cases where the set of 
rational points is not finite.

\begin{proposition} \label{p:quo}
Let $C , X_1 , \ldots , X_n , B$ be smooth projective curves (not necessarily 
connected), and let 
\[
\xymatrix{
 & C \ar[dl] _{r_1} 
 \ar[dr] ^{r_n} 
 \\ 
X_1 \ar[dr] & \cdots & X_n \ar[dl] \\
 & B 
}
\]
be a commutative diagram, where all morphisms are finite and flat.  There 
is a smooth projective curve $X_C$ and a commutative diagram 
\[
\xymatrix{
 & C \ar[dl] _{r_1} 
 \ar[dr] ^{r_n} 
 \\ 
X_1 \ar[dr] & \cdots & X_n \ar[dl] \\
 & X_C 
}
\]
such that for every projective curve $D$ fitting in the commutative diagram 
of solid arrows
\[
\xymatrix{
 & C \ar[dl] _{r_1} 
 \ar[dr] ^{r_n} 
 \\ 
X_1 \ar[dr] \ar[ddr] & \cdots & X_n \ar[dl] \ar[ddl] \\
 & X_C \ar@{-->}[d] \\
 & D 
}
\]
the dashed arrow $\xymatrix{X_C \ar@{-->}[r] & D}$ exists uniquely.
\end{proposition}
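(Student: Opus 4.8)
The plan is to reformulate the problem in terms of rings of rational functions and to realize $X_C$ through a single intersection of subrings. For a reduced projective curve $Z$ over the ground field $k$ (possibly disconnected) write $\mathcal{R}(Z)$ for its total ring of rational functions, that is the product $\prod_\alpha k(Z_\alpha)$ over the connected components $Z_\alpha$ of $Z$, obtained by localizing the structure sheaf at the generic points. Since every morphism in the diagram is finite and flat, hence clopen and surjective onto its image, each $r_i \colon C \to X_i$ induces an \emph{injective} $k$-algebra homomorphism $\mathcal{R}(X_i) \hookrightarrow \mathcal{R}(C)$, and likewise the structure maps give injections $\mathcal{R}(B) \hookrightarrow \mathcal{R}(X_i) \hookrightarrow \mathcal{R}(C)$ whose composites all agree by commutativity of the diagram. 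I would then \emph{define} $R := \bigcap_{i=1}^n \mathcal{R}(X_i)$, the intersection being taken inside $\mathcal{R}(C)$, and take $X_C$ to be the curve with $\mathcal{R}(X_C)=R$.

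The key point, and the main obstacle, is to show that $R$ really is the total ring of rational functions of a \emph{smooth projective} curve, that is, a finite product of finitely generated field extensions of $k$ of transcendence degree one. First, $R$ is a subring of the reduced ring $\mathcal{R}(C)$, hence reduced. Second, because $C \to B$ is finite the $\mathcal{R}(B)$-module $\mathcal{R}(C)$ is finitely generated; since $\mathcal{R}(B)$ is a finite product of fields, it is an Artinian (in particular Noetherian) ring, so its submodule $R$ is again a finite $\mathcal{R}(B)$-module. Consequently $R$ is a ring that is module-finite over an Artinian ring, hence itself Artinian, and a reduced Artinian ring is a finite product of fields, say $R = \prod_\gamma F_\gamma$. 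Finally, the inclusion $\mathcal{R}(B) \subseteq R$ shows each $F_\gamma$ contains some $k(B_\beta)$ and is contained in some $k(C_\alpha)$, so each $F_\gamma$ is finitely generated of transcendence degree one over $k$. Letting $X_C := \coprod_\gamma \widetilde{F_\gamma}$ be the disjoint union of the associated smooth projective models yields a smooth projective curve with $\mathcal{R}(X_C)=R$.

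The remaining steps are then formal. The inclusions $R \subseteq \mathcal{R}(X_i)$ exhibit $\mathcal{R}(X_i)$ as finite over $R$ (everything being finite over $\mathcal{R}(B)$), and a finite extension of rational-function rings of smooth projective curves corresponds to a finite morphism $X_i \to X_C$, automatically flat since both curves are smooth. By construction all composites $C \to X_i \to X_C$ induce the single inclusion $R \hookrightarrow \mathcal{R}(C)$, so the second diagram commutes. For the universal property, given $D$ fitting in the solid diagram, the finite flat maps $X_i \to D$ give an injection $\mathcal{R}(D) \hookrightarrow \mathcal{R}(C)$ factoring through each $\mathcal{R}(X_i)$; since the composites $C \to X_i \to D$ all coincide, the image of $\mathcal{R}(D)$ in $\mathcal{R}(C)$ lies in $\bigcap_i \mathcal{R}(X_i) = R$, yielding a unique ring map $\mathcal{R}(D) \to R$ and hence a morphism $X_C \to D$ making the diagram commute. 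Uniqueness follows because the maps $X_i \to X_C$ are surjective, so any morphism out of $X_C$ compatible with the diagram is determined by its composites with them. The only genuinely delicate step is the one isolated above: the disconnectedness of the curves is exactly what forces one to work with total rational-function rings and the reduced/Artinian criterion, rather than with a single function field, in order to see that $\bigcap_i \mathcal{R}(X_i)$ is again a product of one-dimensional function fields.
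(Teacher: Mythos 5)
Your construction is genuinely different from the paper's. The paper forms the disjoint union $X = X_1 \sqcup \cdots \sqcup X_n$, takes the equivalence relation $\sim_C$ generated by identifying $r_i(c)$ with $r_j(c)$, shows its graph is a finite-type subscheme of $X \times X$ that is flat and proper over each factor, and invokes SGA3, Th\'eor\`eme V.7.1, to obtain $X_C$ as the quotient $X/\sim_C$, whose universal property is that of a categorical quotient. You instead build $X_C$ birationally, as the smooth projective model of $R=\bigcap_i r_i^*\mathcal{R}(X_i)\subseteq\mathcal{R}(C)$. The two outputs agree, since the ring of rational functions of the paper's quotient is exactly the equalizer $\{(f_1,\ldots,f_n) : r_1^*f_1=\cdots=r_n^*f_n\}\cong R$. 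Your route is more elementary --- a reduced-Artinian-algebra argument in place of descent machinery --- and your identification of $R$ as a finite product of function fields of transcendence degree one is correct and is the real content of the construction. What the quotient approach buys is a universal property that holds against arbitrary targets with no dominance hypotheses.

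That is precisely where your verification is incomplete. First, you write ``the finite flat maps $X_i\to D$,'' but the proposition imposes no such hypothesis: it quantifies over every projective curve $D$ fitting in the solid diagram. A non-constant morphism of projective curves is automatically finite, so the only failure mode is a map $X_i\to D$ contracting a component of $X_i$ to a point; in that case there is no induced homomorphism $\mathcal{R}(D)\to\mathcal{R}(C)$ and your mechanism does not start. The same dominance issue affects your claim that each $r_i^*$ is injective: if $r_i$ misses a component of $X_i$ (the curves are allowed to be disconnected and finite flat maps need not be surjective), then $r_i^*$ kills the corresponding factor of $\mathcal{R}(X_i)$, and ``surjective onto its image'' is vacuous. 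Second, a homomorphism $\mathcal{R}(D)\to R$ only gives a dominant rational map $X_C\dashrightarrow D$; you should say that it extends to a morphism because $X_C$ is a smooth curve and $D$ is proper, and that the resulting triangles commute as morphisms of schemes --- not merely at generic points --- because the sources are reduced and $D$ is separated (the same remark is needed to see that the composites $C\to X_i\to X_C$ genuinely coincide, so that $X_C$ itself fits in the diagram). These are standard repairs, so I would call your argument a correct alternative proof modulo these two points.
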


\begin{proof}
Let $X$ be the disjoint union $X := X_1 \sqcup \cdots \sqcup X_n$.  The 
morphisms $r_1,\ldots,r_n$ determine an equivalence relation $\sim_C$ on $X$, 
where $x \sim_C y$ if there is a sequence $(i_1 , \ldots , i_t)$ of elements 
of $\{1,\ldots,n\}$, and points $x_1 = x \in X_{i_1}$, 
$x_2 \in X_{i_2}$, \ldots , $x_{t-1} \in X_{i_{t-1}}$, $x_t = y \in X_{i_t}$ 
and a sequence $c_1 , \ldots , c_{t-1}$ of points of $C$ such that for 
all $j \in \{1,\ldots,t-1\}$ we have $r_{i_j} (c_i) = r_{i_{j+1}} (c_i)$.  
Thus, a pair $(x,y) \in X \times X$ is in the relation determined by 
the morphisms $r_1,\ldots,r_n$ if and only if there is a sequence 
$I := (i_1 , \ldots , i_t)$ of elements of $\{1,\ldots,n\}$ such that 
the pair $(x,y)$ is in the image of the composition 
\[
C \times _{X_{i_2}} \times \cdots \times _{X_{i_{t-1}}} C 
\stackrel{\pi}{\longrightarrow} C \times C 
\stackrel{(r_{i_1},r_{i_t})}{\longrightarrow} X_{i_1} 
\times X_{i_t} \subset X \times X
\]
where $\pi$ is the projection to the first and last factor; denote 
by $C_I \subset X \times X$ the image of this morphism.  Observe that, 
for every finite sequence $I$ of elements of $\{1 , \ldots , n\}$, the 
scheme $C_I$ is a closed subscheme of $X \times X$ that is finite and 
flat over each factor $X$ and hence also over $B$; in particular, each 
scheme $C_I$ has non-zero degree over $B$.  Moreover, if the pair $(x,y)$ 
is in the relation $\sim_C$, then $x$ and $y$ have the same image in $B$.  
From this it follows that pairs $(x,y)$ in the relation $\sim_C$ are 
covered by at most $\bigl( \sum _i \deg(f_i) \bigr)^2$ of the schemes $C_I$ 
defined above since they are contained in $X \times _B X$.  It follows 
that the whole graph $R_C \subset X \times X$ of the relation $\sim_C$ 
is a subscheme of finite type of $X \times X$ that is flat and proper 
over each factor.  The hypotheses of~\cite{SGA3}, Th\'eor\`eme~V.7.1,
are therefore satisfied and the result follows.
\end{proof}

\begin{corollary} \label{coge1}
With the notation of the previous proposition, assume further that the 
curves $C,X_1 , \ldots , X_n$ all have genus one.  Then the curve $X_C$ 
is a torsor under $\Jac (C)/K$, where $K$ is the subgroup of $\Jac (C)$ 
generated by the kernels of the morphisms 
$\Jac (C) \to \Jac (X_1)$, \ldots , $\Jac (C) \to \Jac (X_n)$.
\end{corollary}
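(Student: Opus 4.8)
The plan is to identify $X_C$ explicitly as a quotient of $C$ and then verify that this quotient satisfies the universal property of Proposition~\ref{p:quo}. Write $E := \Jac(C)$ and $E_i := \Jac(X_i)$, so that $C$ is a torsor under $E$ and each $X_i$ is a torsor under $E_i$. A nonconstant finite morphism of genus one curves is, after base change to a point of its source, the composite of a translation with an isogeny; consequently each $r_i$ has a well-defined ``linear part'' $\phi_i \colon E \to E_i$, which is exactly the Albanese map $\Jac(C)\to\Jac(X_i)$ appearing in the statement, characterized by $r_i(c) - r_i(c') = \phi_i(c-c')$ for all geometric points $c,c'$ of $C$. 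This characterization is Galois-equivariant, so $\phi_i$ is defined over the base field; since $r_i$ is finite and nonconstant, $\phi_i$ is an isogeny, and $r_i$ exhibits $X_i$ as the quotient $C/\ker(\phi_i)$, a torsor under $E_i \simeq E/\ker(\phi_i)$. In particular $\ker(\phi_i)$ is precisely the group of translations of $C$ that preserve the fibres of $r_i$, and $r_i(c+e)=r_i(c)+\phi_i(e)$.

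Now let $K = \langle \ker(\phi_1), \ldots, \ker(\phi_n)\rangle \subseteq E$, a finite subgroup (scheme), and set $T := C/K$, a torsor under $E/K=\Jac(C)/K$ with quotient map $g\colon C\to T$. Because $\ker(\phi_i)\subseteq K$, the identification $X_i \simeq C/\ker(\phi_i)$ realizes the further quotient $g_i\colon X_i \to T$ as a well-defined finite morphism satisfying $g_i\circ r_i = g$ for every $i$; thus $(T,\{g_i\})$ is a commutative cocone over the given diagram, i.e.\ a candidate for $X_C$. It remains to check universality. Given any projective curve $D$ with morphisms $h_i\colon X_i\to D$ whose composites $h := h_i\circ r_i$ agree for all $i$, one computes, for $e\in\ker(\phi_i)$, that $h(c+e)=h_i(r_i(c)+\phi_i(e))=h_i(r_i(c))=h(c)$; hence $h$ is invariant under each $\ker(\phi_i)$, and therefore under the group $K$ they generate. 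Consequently $h$ factors as $h=\lambda\circ g$ for a unique $\lambda\colon T\to D$ (unique because $g$ is faithfully flat, hence an epimorphism), and from $\lambda\circ g_i\circ r_i = h_i\circ r_i$ together with the surjectivity of $r_i$ we deduce $\lambda\circ g_i=h_i$. This is exactly the universal property characterizing $X_C$ in Proposition~\ref{p:quo}, so $X_C\simeq T = C/K$ is a torsor under $\Jac(C)/K$, as claimed. Note that the curve $B$ plays no role beyond being one such target $D$, and in particular need not have genus one.

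The step I expect to require the most care is the first one: making precise, over a possibly pointless base field, that $r_i$ has a well-defined linear part $\phi_i$ with the equivariance $r_i(c+e)=r_i(c)+\phi_i(e)$, and that $X_i\simeq C/\ker(\phi_i)$ with $\ker(\phi_i)$ equal to the fibre-preserving translations. This is where the genus one hypothesis is essential; once the $r_i$ are understood as equivariant maps of torsors, the verification of the universal property is purely formal. If one wishes to avoid quotients by non-reduced group schemes, one may first reduce to the separable case (as elsewhere in the paper the ramification indices are taken coprime to the characteristic), so that $K$ is \'etale and $T=C/K$ is an ordinary quotient by a finite group.
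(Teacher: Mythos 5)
Your proof is correct and follows essentially the same route as the paper's: both identify $X_C$ with the quotient $C/K$ and verify that any cocone morphism out of $C$ is constant on $K$-orbits, hence factors through that quotient. The only difference is presentational --- the paper first reduces to the algebraically closed case in which the $r_i$ are homomorphisms of elliptic curves, whereas you work directly with torsors and the linear parts $\phi_i$, which is mathematically equivalent.
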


\begin{proof}
We can clearly assume that the ground field is algebraically closed, and 
further reduce to the case in which the morphisms $r_1, \ldots , r_n$ 
are all homomorphisms of elliptic curves.  Thus $K \subset C$ is identified 
with the subgroup generated by the kernels of all the morphisms 
$r_1, \ldots , r_n$ and let $C' := C/K$ denote the quotient of $C$ by 
the subgroup $K$.  Clearly, the curves $X_1 , \ldots , X_n$ all admit 
a morphism to $C'$ making the diagram of Proposition~\ref{p:quo} commute.  
It follows from the previous proposition that $X_C$ admits a morphism 
to $C'$ that is necessarily non-constant, and we conclude that $X_C$ 
has genus one.  Moreover, it is also clear that the curve $X_C$ is 
isomorphic to the curve $C'$: if $D$ is any curve making the diagram 
of Proposition~\ref{p:quo} commute, then the fiber in $C$ of the 
morphism $C \to D$ over the image of the origin in $C$ contains all 
the kernels of the morphisms $r_1 , \ldots , r_n$, and hence it contains 
the subgroup $K$, since the morphism factors through the elliptic curve $X_C$.
\end{proof}

Thus we see that if the curves $X_1,\ldots,X_n$ have genus one and if 
the fibered product $X_1 \times _B \cdots \times _B X_n$ contains a 
geometrically integral curve of geometric genus one defined over the 
ground field, then the morphism $X_1 \to B$ factors through a 
morphism $E \to B$ defined over the ground field, where $E$ is a 
smooth geometrically integral curve of geometric genus one and the 
morphism $X_1 \to E$ is an isogeny defined over the extension field.

\subsection{Genus one}
We specialize what we just proved to the case of the relative Weil restriction 
from an elliptic curve.  Let $L/K$ be an extension of number fields; let $E$ 
be an elliptic curve defined $L$ and let $B$ be a smooth projective integral 
curve defined over $K$.  Suppose that $h \colon E \to B_L$ is a non-constant 
morphism.  The relative Weil restriction $\Res_h(E)$ is a curve defined 
over $K$.  Fix an algebraic closure $\overline{K}$ of $K$, denote by 
$\sigma_1 , \ldots , \sigma_n$ the distinct embeddings of $L/K$ in 
$\overline{K}$, and let $E_1 , \ldots , E_n$ be the corresponding Galois 
conjugates of $E$.  Over the field $\overline{K}$, the curve $\Res_h(E)$ 
is isomorphic to $E_1 \times _B \cdots \times_B E_n$ (Lemma~\ref{profi}).  
Suppose that the curve $\Res_h(E)$ contains infinitely many $K$-rational 
points.  It follows from Faltings' Theorem that there is a component $C$ 
of $\Res_h(E)$ of geometric genus at most one, defined over $K$; if $C$ 
is not normal, we replace it by its normalization.  Since the morphism 
$\Res_h(E) \to B$ is flat, all its fibers are finite and therefore the 
curve $C$ is also finite over $B$.  In particular, the $L$-morphisms 
$C \to E_1 , \ldots , C \to E_n$ are all finite, since all the curves 
are smooth.  Let $E_C$ denote the universal curve fitting in the diagram 
\[
\xymatrix{
 & C \ar[dl] _{r_1} 
 \ar[dr] ^{r_n} 
 \\ 
E_1 \ar[dr] & \cdots & E_n \ar[dl] \\
 & E_C 
}
\]
of Proposition~\ref{p:quo}.  Since the curves $C,E_1,\ldots,E_n$ all 
have genus one, we may therefore apply Corollary~\ref{coge1} to deduce 
that the curve $E_C$ is also a torsor under an elliptic curve, and 
therefore also $E_C$ has genus one.  In the case in which $L$ is a 
number field, we obtain the following corollary.

\begin{corollary}
Let $L/K$ be an extension of number fields and suppose that $E$ is an 
elliptic curve over $L$, $B$ is a curve over $K$, and $h \colon E \to B_L$ 
is a non-constant morphism.  If the set of $K$-rational points of 
$\Res_h(E)$ is infinite, then the curve $E$ is $L$-isogenous to an 
elliptic curve defined over $K$ having positive rank over $K$.
\end{corollary}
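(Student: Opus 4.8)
The plan is to realize the required elliptic curve directly as the normalization of a suitable component of $\Res_h(E)$. Since $\Res_h(E)$ has only finitely many $K$-irreducible components while $\Res_h(E)(K)$ is infinite, some component $C$ defined over $K$ has $C(K)$ infinite. Because a $K$-point is Galois-fixed, it lies simultaneously on every geometric conjugate of $C$; were $C$ geometrically reducible, these conjugates would meet in a finite set, so the infinitude of $C(K)$ forces $C$ to be geometrically integral. Passing to the normalization $\widetilde{C} \to C$, which is finite and birational, all but the finitely many points of $C(K)$ lying over singularities lift to $\widetilde{C}(K)$; hence $\widetilde{C}(K)$ is again infinite.

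Next I would determine the genus of $\widetilde{C}$. Applying Faltings' Theorem to the geometrically integral curve $\widetilde{C}$, the infinitude of $\widetilde{C}(K)$ forces $g(\widetilde{C}) \leq 1$. In the other direction, the discussion preceding the statement records that $\Res_h(E) \to B$ is finite, so $C$ and hence $\widetilde{C}$ is finite over $B$; as $E \to B$ is also finite, the projection $\phi \colon \widetilde{C}_L \to E$ cannot be constant (otherwise $\widetilde{C} \to B$ would factor through a point, contradicting finiteness over $B$), and it is therefore a finite morphism of curves. A nonconstant morphism onto the genus-one curve $E$ gives $g(\widetilde{C}) \geq 1$, whence $g(\widetilde{C}) = 1$.

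Now I would assemble the conclusion. The curve $\widetilde{C}$ is a smooth projective genus-one curve over $K$ carrying a $K$-rational point, hence an elliptic curve over $K$; since $\widetilde{C}(K)$ is infinite and finitely generated by the Mordell--Weil theorem, $\widetilde{C}$ has positive rank over $K$. The morphism $\phi$ is the restriction of the canonical $L$-morphism $\Res_h(E)_L \to E$ noted after Lemma~\ref{lem:rwr}, so it is defined over $L$. Fixing a $K$-rational origin $O$ on $\widetilde{C}$ and composing $\phi$ with translation by $-\phi(O) \in E(L)$ produces an $L$-morphism of elliptic curves sending $O$ to the origin of $E$; by rigidity such a morphism is a homomorphism, and being finite it is an $L$-isogeny. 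Thus $E$ is $L$-isogenous to the elliptic curve $\widetilde{C}$, which is defined over $K$ and of positive rank over $K$, exactly as asserted.

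The step I expect to require the most care is the field-of-definition bookkeeping underlying the last paragraph: one must verify that the chosen component, its normalization, and the origin can all be taken over $K$, while the isogeny is produced over $L$ and not some larger field. The essential inputs are that $\widetilde{C}$ descends to $K$ (guaranteed by geometric integrality together with the $K$-structure of $\Res_h(E)$), that $\phi$ descends to $L$, and that translation by the $L$-point $\phi(O)$ is defined over $L$; once these are in place, the homomorphism property, and hence the $L$-isogeny, follow formally. No single point is deep, but the argument is correct only if these compatibilities are all checked.
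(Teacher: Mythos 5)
Your proposal is correct and takes essentially the same route as the paper's own proof: extract from Faltings' Theorem a geometrically integral component of $\Res_h(E)$ of genus at most one, defined over $K$ and with infinitely many $K$-points, then use the non-constant $L$-morphism to $E$ to force genus one and obtain the $L$-isogeny. The paper compresses this into two sentences; you have merely filled in the details (geometric integrality of the component, passage to the normalization, and the rigidity/translation argument), all of which are sound.
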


\begin{proof}
By Faltings' Theorem we deduce that $\Res_h(E)$ contains a geometrically 
integral component $E'$ of genus at most one defined over $K$ and having 
infinitely many $K$-rational points.  Since $E'$ admits a non-constant 
$L$-morphism to $E$, it follows that $E'$ has genus one and that it 
is $L$-isogenous to $E$, as required.
\end{proof}

\begin{remark}
In the case in which $E$ and $B$ have genus one, then all the geometric 
components of $\Res_h(E)$ have genus one.  Hence, finding the $K$-rational 
points of $\Res_h(E)$ is equivalent to finding the $K$-rational points 
of finitely many elliptic curves that are $K$-isogenous to $B$.
\end{remark}

This completes our analysis in the case in which the curve $C$ of 
Remark~\ref{gamu} has genus one.  We next discuss the case in which $C$ 
has genus zero.  We are not able to give a treatment of this case that 
is as detailed as the case of genus one.

\subsection{Genus zero}
We specialize to the case in which the curve $C$ is isomorphic to 
$\mathbb{P}^1_L$ and hence $B$ is isomorphic to $\mathbb{P}^1_K$.  
The morphism $h \colon \mathbb{P}^1_L \to \mathbb{P}^1_L = (\mathbb{P}^1_K)_L$ 
is therefore determined by a rational function $F \in L(x)$.  The set 
of $K$-rational points of $\Res_h(\mathbb{P}^1_L)$ is essentially 
the set $A_F \subset L$ of values of $x \in L$ such that $F(x)$ lies 
in $K$, mentioned in the introduction.  

\begin{definition}
Let $G \colon C \to D$ be a finite morphism between smooth curves, 
let $p \in D$ be a geometric point.  The {\em{type}} of $p$ is the 
partition $\lambda_p$ of $\deg(G)$ determined by the fiber $G^{-1}(p)$.  
We extend this definition to the case in which the curve $C$ is reduced, 
but not necessarily smooth, by replacing $G$ with the morphism 
$G^\nu \colon C^\nu \to D$, where $C^\nu$ is the normalization of $C$ 
and $G^\nu$ is the morphism induced by $G$.
\end{definition}

In this section we restrict our attention to field extensions 
$L \supset K$ of degree at most three.

\subsubsection*{Degree two}
Let $L \supset K$ be a field extension of degree two, let 
$F \colon \mathbb{P}^1_L \to \mathbb{P}^1_L$ be a morphism of degree 
three.  We are interested in the values $\ell \in L$ such 
that $f(\ell) \in K$.  Assume that $L = K(\sqrt{d})$ where 
$d \in K \setminus K^2$; write $\alpha = a + b \sqrt{d}$, for $a,b \in K$.  
Parameterizing the rational curve $\Res_F(F)$ we find that, for every 
element $t$ in $K$, the evaluation 
\[
p \left( \frac{d b t^2+2 a t+b}{t (d t^2 + 3)} (\sqrt{d} t + 1) \right)
\]
is also in $K$.

\subsubsection*{Degree three}
Suppose that $F \colon \mathbb{P}^1_L \to \mathbb{P}^1_L$ is a morphism 
of degree three, and suppose that the characteristic of $K$ is neither 
two nor three.  As usual, we are interested in the values $\ell \in L$ 
such that $f(\ell) \in K$.  Denote by $\overline{F}$ the morphism 
conjugate to $F$ under the Galois involution of $L/K$.  We construct 
examples of morphisms $F$ such that $\Res_F(\mathbb{P}^1_L)$ is a 
geometrically integral curve of geometric genus zero.  Note that the 
curve $\Res_F(\mathbb{P}^1_L)$ has a line bundle of degree nine given 
by pull-back of $\mathcal{O}_{\mathbb{P}^1_K}(1)$; since this curve is 
geometrically rational and it has a line bundle of odd degree, it follows 
that it is rational over $K$.

Denote by $\Res_F(F)^\nu$ the composition of the normalization map 
of $\Res_F(\mathbb{P}^1_L)$ and $\Res_F(F)$.  Applying the Hurwitz formula 
to the morphisms $F$, $\overline{F}$ and $\Res_F(F)^\nu$, we find that the 
respective total degrees of the ramification divisors are $4$, $4$ and $16$.

We begin by analyzing the ramification patterns.  For a geometric point 
$p \in \mathbb{P}^1$, Table~\ref{fihu} shows the possibilities of the 
types of the fibers of the three morphisms $F$, $\overline{F}$ 
and $\Res_F(F)^\nu$ and the contributions of each to the Hurwitz formula.  
In our setup, the Galois involution of $L/K$ induces a bijection between 
fiber types of $F$ and of $\overline{F}$: this is recorded in the last 
column of Table~\ref{fihu}.
\begin{table} \label{fihu}
\begin{tabular}{|c|c|c|c|}
\hline
Fiber type of & Fiber type of & Hurwitz contribution to & Symmetrized \\
the pair $F,\overline{F}$ & $\Res_F(F)$ & $[F,\overline{F}]$ and 
$[\Res_F(F)]$ & contribution \\[5pt]
\hline
$\bigl( (1,1,1) , (1,1,1) \bigr)$ & $(1,1,1 , 1,1,1 , 1,1,1)$ 
& [0,0] , [0] & [0,0] , [0] \\[5pt]
$\bigl( (2,1) , (1,1,1) \bigr)$ & $(2,2,2 , 1,1,1)$ 
& [1,0] , [3] & [1,1] , [6] \\[5pt]
$\bigl( (3) , (1,1,1) \bigr)$ & $(3,3,3)$ & [2,0] , [6] & [2,2] , [12] \\[5pt]
$\bigl( (2,1) , (2,1) \bigr)$ & $(2,2,2,2,1)$ 
& [1,1] , [4] & [1,1] , [4] \\[5pt]
$\bigl( (2,1) , (3) \bigr)$ & $(6,3)$ & [1,2] , [7] & [3,3] , [14] \\[5pt]
$\bigl( (3) , (3) \bigr)$ & $(3,3,3)$ & [2,2] , [6] & [2,2] , [6] \\[5pt]
\hline
\end{tabular}
\caption{Fiber types and contributions to the Hurwitz formula for 
fiber products of morphisms of degree three}
\end{table}

\noindent
It is now easy to check that the only possibilities for the fiber 
types of the morphisms $F,\overline{F}$ are 
\begin{equation}
\bigl( (3) , (3) \bigr) + \bigl( (2,1) , (1,1,1) \bigr) 
+ \bigl( (1,1,1) , (2,1) \bigr) + \bigl( (2,1) , (2,1) \bigr) \label{pollo}
\end{equation}
and
\begin{equation}
\bigl( (2,1) , (2,1) \bigr) + \bigl( (2,1) , (2,1) \bigr) 
+ \bigl( (2,1) , (2,1) \bigr) + \bigl( (2,1) , (2,1) \bigr) . \label{simme}
\end{equation}
We only analyze the case~\eqref{pollo}.

\subsubsection*{Fiber types~\eqref{pollo}}
The fiber type $((3),(3))$ in~\eqref{pollo} implies that the coordinate 
on $\mathbb{P}^1$ can be chosen so that the morphism $F$ is a polynomial 
and the fiber type $((2,1) , (2,1))$ shows that one of the ramification 
points in defined over $K$.  This case is realized by morphisms 
$F \colon \mathbb{P}^1_L \to \mathbb{P}^1_L$ of the form 
$p(x) = x^2(x-\alpha)$, for $\alpha \in L$.  Since the derivative 
of $p$ vanishes at $0$ and at $\frac{2\alpha}{3}$, it follows that 
the ramification types of $F,\overline{F}$ are of the form~\eqref{pollo} 
when $\alpha \notin K$.  

\begin{bibdiv}
\begin{biblist}

\bib{az}{article} {
    AUTHOR = {Avanzi, Roberto M.},
    AUTHOR = {Zannier, Umberto M.},
     TITLE = {Genus one curves defined by separated variable polynomials and
              a polynomial {P}ell equation},
   JOURNAL = {Acta Arith.},
    VOLUME = {99},
      YEAR = {2001},
    NUMBER = {3},
     PAGES = {227--256}
}

\bib{bt}{article} {
    AUTHOR = {Bilu, Yuri F. and Tichy, Robert F.},
     TITLE = {The {D}iophantine equation {$f(x)=g(y)$}},
   JOURNAL = {Acta Arith.},
    VOLUME = {95},
      YEAR = {2000},
    NUMBER = {3},
     PAGES = {261--288}
}

\bib{bruinth}{book} {
    AUTHOR = {Bruin, N. R.},
     TITLE = {Chabauty methods and covering techniques applied to
              generalized {F}ermat equations},
    SERIES = {CWI Tract},
    VOLUME = {133},
      NOTE = {Dissertation, University of Leiden, Leiden, 1999},
 PUBLISHER = {Stichting Mathematisch Centrum Centrum voor Wiskunde en
              Informatica},
   ADDRESS = {Amsterdam},
      YEAR = {2002},
     PAGES = {x+77}
}

\bib{bruinchab}{article} {
    AUTHOR = {Bruin, Nils},
     TITLE = {Chabauty methods using elliptic curves},
   JOURNAL = {J. Reine Angew. Math.},
    VOLUME = {562},
      YEAR = {2003},
     PAGES = {27--49}
}

%
%
%
%
\bib{prolegom}{book} {
    AUTHOR = {Cassels, J. W. S.},
    AUTHOR = {Flynn, E. V.},
     TITLE = {Prolegomena to a middlebrow arithmetic of curves of genus
              {$2$}},
    SERIES = {London Mathematical Society Lecture Note Series},
    VOLUME = {230},
 PUBLISHER = {Cambridge University Press},
   ADDRESS = {Cambridge},
      YEAR = {1996},
     PAGES = {xiv+219}
}

\bib{chab1}{article} {
    AUTHOR = {Chabauty, Claude},
     TITLE = {Sur les points rationnels des courbes alg\'ebriques de genre
              sup\'erieur \`a l'unit\'e},
   JOURNAL = {C. R. Acad. Sci. Paris},
    VOLUME = {212},
      YEAR = {1941},
     PAGES = {882--885}
}

\bib{chab2}{article} {
    AUTHOR = {Chabauty, Claude},
     TITLE = {Sur les points rationnels des vari\'et\'es alg\'ebriques dont
              l'irr\'egularit\'e est sup\'erieure \`a la dimension},
   JOURNAL = {C. R. Acad. Sci. Paris},
    VOLUME = {212},
      YEAR = {1941},
     PAGES = {1022--1024}
}

\bib{colemanchab}{article} {
    AUTHOR = {Coleman, Robert F.},
     TITLE = {Effective {C}habauty},
   JOURNAL = {Duke Math. J.},
    VOLUME = {52},
      YEAR = {1985},
    NUMBER = {3},
     PAGES = {765--770}
}

\bib{colemanpadic}{article} {
    AUTHOR = {Coleman, Robert F.},
     TITLE = {Torsion points on curves and {$p$}-adic abelian integrals},
   JOURNAL = {Ann. of Math. (2)},
    VOLUME = {121},
      YEAR = {1985},
    NUMBER = {1},
     PAGES = {111--168}
}

%
\bib{faltings}{article} {
    AUTHOR = {Faltings, G.},
     TITLE = {Endlichkeitss\"atze f\"ur abelsche {V}ariet\"aten \"uber
              {Z}ahlk\"orpern},
   JOURNAL = {Invent. Math.},
    VOLUME = {73},
      YEAR = {1983},
    NUMBER = {3},
     PAGES = {349--366}
}

\bib{flynnchab}{article} {
    AUTHOR = {Flynn, E. V.},
     TITLE = {A flexible method for applying {C}habauty's theorem},
   JOURNAL = {Compositio Math.},
    VOLUME = {105},
      YEAR = {1997},
    NUMBER = {1},
     PAGES = {79--94}
}

\bib{flywet1}{article} {
    AUTHOR = {Flynn, E. Victor and Wetherell, Joseph L.},
     TITLE = {Finding rational points on bielliptic genus 2 curves},
   JOURNAL = {Manuscripta Math.},
    VOLUME = {100},
      YEAR = {1999},
    NUMBER = {4},
     PAGES = {519--533}
}

\bib{flywet2}{article} {
    AUTHOR = {Flynn, E. Victor and Wetherell, Joseph L.},
     TITLE = {Covering collections and a challenge problem of {S}erre},
   JOURNAL = {Acta Arith.},
    VOLUME = {98},
      YEAR = {2001},
    NUMBER = {2},
     PAGES = {197--205}
}

\bib{lortuck}{article} {
    AUTHOR = {Lorenzini, Dino and Tucker, Thomas J.},
     TITLE = {Thue equations and the method of {C}habauty-{C}oleman},
   JOURNAL = {Invent. Math.},
    VOLUME = {148},
      YEAR = {2002},
    NUMBER = {1},
     PAGES = {47--77}
}

\bib{pa}{article} {
    AUTHOR = {Pakovich, F.},
     TITLE = {On the equation {$P(f)=Q(g)$}, where {$P,Q$} are polynomials
              and {$f,g$} are entire functions},
   JOURNAL = {Amer. J. Math.},
    VOLUME = {132},
      YEAR = {2010},
    NUMBER = {6},
     PAGES = {1591--1607}
}

%

\bib{SGA3}{book} {
    AUTHOR = {Serre, J.-P.},
    AUTHOR = {Grothendieck, A.},
    AUTHOR = {Artin, M.},
    AUTHOR = {Bertin, J. E.},
    AUTHOR = {Demazure, M.},
    AUTHOR = {Gabriel, P.},
    AUTHOR = {Raynaud, M.},
     TITLE = {Sch\'emas en groupes. {F}asc. 2a: {E}xpos\'es 5 et 6},
    SERIES = {S\'eminaire de G\'eom\'etrie Alg\'ebrique de l'Institut des
              Hautes \'Etudes Scientifiques},
    VOLUME = {1963/64},
 PUBLISHER = {Institut des Hautes \'Etudes Scientifiques},
   ADDRESS = {Paris},
      YEAR = {1963/1965},
     PAGES = {ii+180 pp. (not consecutively paged)}
}

\bib{sch}{book} {
    AUTHOR = {Schinzel, Andrzej},
     TITLE = {Selected topics on polynomials},
 PUBLISHER = {University of Michigan Press},
   ADDRESS = {Ann Arbor, Mich.},
      YEAR = {1982},
     PAGES = {xxi+250}
}

\bib{siksekchab}{article} {
    AUTHOR = {Siksek, Samir},
     TITLE = {Explicit chabauty over number fields},
   JOURNAL = {arXiv:1010.2603v2}
}


\bib{wethth}{book} {
    AUTHOR = {Wetherell, Joseph Loebach},
     TITLE = {Bounding the number of rational points on certain curves of
              high rank},
      NOTE = {Thesis (Ph.D.)--University of California, Berkeley},
 PUBLISHER = {ProQuest LLC, Ann Arbor, MI},
      YEAR = {1997},
     PAGES = {61}
}

\bib{za}{article} {
    AUTHOR = {Zannier, U.},
     TITLE = {Ritt's second theorem in arbitrary characteristic},
   JOURNAL = {J. Reine Angew. Math.},
    VOLUME = {445},
      YEAR = {1993},
     PAGES = {175--203}
}

\end{biblist}
\end{bibdiv}

\end{document}